\numberwithin{equation}{section}
\newtheorem{lem}{Lemma}
\newtheorem{thm}{Theorem}
\newtheorem{cor}{Corollary}
\newtheorem{rem}{Remark}
\begin{document}

\begin{large}
\centerline{\Large \bf Relations in the Set of Divisors of an Integer $n$}

\end{large}
\vskip 10pt
\begin{large}
\centerline{\sc  Patrick Letendre}
\end{large}
\vskip 10pt
\begin{abstract}
Let $\mathcal{D}_{n} \subset \mathbb{N}$ be the set of the $\tau(n)$ divisors of $n$. We generalize a method developed by Erd\H os, Tenenbaum and de la Bretèche for the study of the set $\mathcal{D}_{n}$. In particular, using these ideas, we establish that
$$
|\{(d_{1},d_{2},d_{3}) \in \mathcal{D}_{n}^3 : d_{1}+d_{2}=d_{3}\}| \le \tau(n)^{2-\delta}
$$
with $\delta=0.045072$.
\end{abstract}
\vskip 10pt
\noindent AMS Subject Classification numbers: 11N37, 11N56, 11N64.

\noindent Key words: divisors, the number of divisors function.

\section{Introduction and notation}

Let $1=t_{1}<t_{2}<\cdots<t_{\tau(n)} = n$ denote the increasing sequence of divisors of a generic integer $n$. We denote the set of these divisors by $\mathcal{D}_{n}$. In \cite{gt}, a new class of arithmetic functions was introduced to study $\mathcal{D}_{n}$. In particular, estimates were obtained for the quantities $|\{d \in \mathcal{D}_{n}:\ d(d+1)\mid n\}|$ and $|\{1 \le i \le \tau(n):\ (t_{i},t_{i+1})=1\}|$. In \cite{rdlb}, these results were significantly refined into a satisfactory theory for this class of functions. Some of the functions concerned already had a history in the literature, notably see \cite{pe:rrh} and \cite{pe:gt}. Additionally, in \cite{rdlb:2} and \cite{kg}, complementary results, specialized on divisors that are values taken by a fixed polynomial, were obtained using different methods. In this article, we propose to generalize the class of considered arithmetic functions, notably by considering $j$-tuples of coprime divisors.

We introduce the functions
$$
\kappa_{j}(n)  :=  \sum_{\substack{d_{i} \mid n \\ i=1,\dots,j \\ \gcd(d_{i_1},d_{i_2})=1 \\ (i_1 \neq i_2)}}1=\prod_{p^v \| n}(jv+1).
$$
Thus, $\kappa_{1}(n)$ is simply the function $\tau(n)$. We say that a set $U \subseteq \mathcal{D}_{n}^{j}$ is {\it regular} if for every $j$-tuple $(d_{1},\dots,d_{j}) \in U$, we have $\gcd(d_{i_1},d_{i_2})=1$ for each $1 \le i_{1} < i_{2} \le j$. In particular, we always have $|U| \le \kappa_{j}(n)$. A mapping $g_{j,n}:U_{g} \rightarrow \mathcal{D}_{n}$ defined on a certain set $U_{g} \subseteq \mathcal{D}_{n}^{j}$ that is regular is said to be $k$-{\it regular} if for every $(d_{1},\dots,d_{j}) \in U_{g}$, we have $\gcd(g_{j,n}(d_{1},\dots,d_{j}),d_{i})=1$ for each $i=1,\dots,j$, and if the following two conditions are satisfied.
\begin{enumerate}
\item[1.] For each $1 \le i \le j$ and for each choice $d_{1},\dots,d_{i-1},d_{i+1},\dots,d_{j}$ and $d$ fixed, the number of solutions $z$ to the equation
$$
g_{j,n}(d_{1},\dots,d_{i-1},z,d_{i+1},\dots,d_{j})=d
$$
with $(d_{1},\dots,d_{i-1},z,d_{i+1},\dots,d_{j}) \in U_{g}$ is at most $k$.
\item[2.] For each $1 \le i \le j$ and for each choice $d_{1},\dots,d_{i-1},d_{i+1},\dots,d_{j}$ and $d$ fixed, the number of solutions $z$ to the equation
$$
zg_{j,n}(d_{1},\dots,d_{i-1},z,d_{i+1},\dots,d_{j})=d
$$
with $(d_{1},\dots,d_{i-1},z,d_{i+1},\dots,d_{j}) \in U_{g}$ is at most $k$.
\end{enumerate}
We simply consider that the application $g_{j,n}$ on each $j$-tuple of $\mathcal{D}_{n}^{j}\setminus U_{g}$ is undefined. We denote by $\mathcal{E}_{j,k}$ the class of arithmetic functions $F_{j,k}(n)$ of the type
$$
F_{j,k}(n)=|U_{g}|
$$
for an application $g_{j,n}$ $k$-regular. Also, following \cite{rdlb}, we define
$$
E_{j,k}(n):=\max_{F_{j,k}\in\mathcal{E}_{j,k}}|F_{j,k}(n)|.
$$
In certain circumstances, we will also want to add the following third condition
\begin{enumerate}
\item[3.] For each $1 \le i_{1} < i_{2} \le j$ and for each choice of $d_{1},\dots,d_{i_{1}-1},d_{i_{1}+1},\dots,d_{i_{2}-1},$ $d_{i_{2}+1},\dots,d_{j},d$ and $d'$ fixed, the number of solutions $(z_{1},z_{2})$ to the system
$$
\left\{\begin{array}{cc}
g_{j,n}(d_{1},\dots,d_{i_{1}-1},z_{1},d_{i_{1}+1},\dots,d_{i_{2}-1},z_{2},d_{i_{2}+1},\dots,d_{j})=d\\
z_{1}z_{2}=d'\\
\end{array}\right.
$$
with $(d_{1},\dots,d_{i_{1}-1},z_{1},d_{i_{1}+1},\dots,d_{i_{2}-1},z_{2},d_{i_{2}+1},\dots,d_{j}) \in U_{g}$ is at most $k$,
\end{enumerate}
in which case we say that $g_{j,n}$ is {\it strongly} $k$-regular. Condition 3 requires $j \ge 2$ to be non-trivial. To emphasize the set of strongly $k$-regular functions, we will simply write $\mathcal{E}^{*}_{j,k}$ and $E^{*}_{j,k}(n)$ for the analogues of $\mathcal{E}_{j,k}$ and $E_{j,k}(n)$ respectively.

We will frequently use the arithmetic functions
$$
\omega(n) :=\sum_{p \mid n}1,\quad V(n) := \max_{p^{v} \| n} v,\quad \Omega(n) :=\sum_{p^{v} \| n}v \quad \mbox{and} \quad \Omega_{2}(n) :=\sum_{p^{v} \| n}v^{2}.
$$
Note that from the Cauchy-Schwarz inequality, we have $\sqrt{\Omega_{2}(n)} \ge \frac{\Omega(n)}{\sqrt{\omega(n)}}$.

\section{Main results}

We begin with the main theorem regarding $k$-regular functions.

\begin{thm}\label{thm:1}
\begin{enumerate}
\item[{\bf (a)}] Let $j \ge 1$ be a fixed integer. We introduce the constants
\begin{eqnarray*}
\delta_{j} & := & \Bigl(\frac{2j^{2}}{(j+1)(2j+1)}\log\frac{2j}{2j+1}+\frac{3j+1}{(j+1)(2j+1)}\log\frac{3j+1}{2j+1}\Bigr)/\log(j+1)\\
& = & f_{\frac{1}{2j+1}}(j)/\log(j+1) \quad (\mbox{see Lemma}\ \ref{lem:4}).
\end{eqnarray*}
For each integer $n \ge 1$,
$$ 
E_{j,k}(n) \le k\kappa_{j}(n)^{1-\delta_{j}}.
$$
\item[{\bf (b)}] Let $n \ge 1$ be an integer and $\delta := 0.045072$. Then
$$
E_{2,k}(n) \le k\kappa_{2}(n)^{1-\delta}.
$$
\end{enumerate}
\end{thm}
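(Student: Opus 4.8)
The plan is to run an entropy argument (equivalently, a Rankin-type weighting) adapted to the multiplicative structure of $\mathcal{D}_{n}$, in the spirit of the theory developed in \cite{rdlb}. Fix a $k$-regular map $g_{j,n}$ on a regular domain $U_{g}$, write $d_{0}:=g_{j,n}(d_{1},\dots,d_{j})$, and consider the extension $\phi(d_{1},\dots,d_{j}):=(d_{0},d_{1},\dots,d_{j})$. Because $U_{g}$ is regular and $\gcd(d_{0},d_{i})=1$ for each $i$, the tuple $(d_{0},\dots,d_{j})$ consists of $j+1$ pairwise coprime divisors of $n$; and $\phi$ is injective since $d_{0}$ is a function of $d_{1},\dots,d_{j}$. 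Hence $|U_{g}|$ equals the number of configurations in $\phi(U_{g})$, and by multiplicativity each configuration is encoded prime by prime, sending every $p^{v}\|n$ either to none of the $j+1$ components or to exactly one of them with an exponent in $\{1,\dots,v\}$. Counting all such encodings gives only $|U_{g}|\le\kappa_{j+1}(n)$, which is far too weak; the point is to exploit Conditions 1 and 2 to discard most encodings.

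First I would turn Conditions 1 and 2 into quantitative constraints on a uniformly random configuration $X\in\phi(U_{g})$. For each $i\in\{1,\dots,j\}$, Condition 1 says that $d_{i}$ is determined up to a factor $k$ once the other divisor-blocks $d_{l}$ ($l\ne i$) and the value $d_{0}$ are fixed, while Condition 2 says the same with $d_{0}$ replaced by the product $d_{i}d_{0}$. In entropy terms, $H(d_{i}\mid d_{0},(d_{l})_{l\ne i})\le\log k$ and $H(d_{i}\mid d_{i}d_{0},(d_{l})_{l\ne i})\le\log k$, while $d_{0}$ being a function of $d_{1},\dots,d_{j}$ gives $H(X)=H(d_{1},\dots,d_{j})=\log|U_{g}|$. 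The role of $d_{0}=g$ being coprime to, yet entangled with, the $d_{i}$ through two different projections (to $d_{0}$ and to $d_{i}d_{0}$) is exactly what forces a genuine loss.

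Next I would combine these conditional bounds with the subadditivity of entropy over the primes $p\|n$, reducing the estimation of $\log|U_{g}|$ to a finite-dimensional concave optimization. Lemma~\ref{lem:4} packages the resulting per-prime saving as a one-parameter quantity $f_{s}(j)$, where $s$ governs the proportion of primes routed to the distinguished block $d_{0}$. Choosing the admissible value $s=\frac{1}{2j+1}$ gives the closed-form saving $f_{1/(2j+1)}(j)$, and dividing by the per-prime weight $\log(j+1)$ of $\kappa_{j}$ at a squarefree prime produces the exponent $\delta_{j}$; one must also verify that among prime powers $p^{v}$ the ratio (local saving)$/(\log(jv+1))$ is smallest at $v=1$, so that the squarefree configuration is the binding case. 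This yields $E_{j,k}(n)\le k\kappa_{j}(n)^{1-\delta_{j}}$ and proves \textbf{(a)}.

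For part \textbf{(b)} the value $s=\frac15$ inherited from \textbf{(a)} is not optimal when $j=2$: maximizing $f_{s}(2)/\log 3$ over the admissible range of $s$ — equivalently, optimizing the full multivariate profile of the three blocks $d_{1},d_{2},d_{0}$ rather than pinning it at the symmetric point — raises the exponent to $\delta=0.045072$. I expect the main obstacle to lie precisely here: solving this non-symmetric extremal problem for $j=2$, certifying the sharp numerical constant $0.045072$ rather than a cleaner but weaker value, and confirming that squarefree $n$ (and not $n$ with large prime powers) remains extremal throughout. The reduction and the entropy encoding are routine once set up; the sharp constant is where the real work concentrates.
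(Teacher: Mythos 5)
You correctly extract from Conditions 1 and 2 the bounds $H(d_i \mid d_0,(d_l)_{l\ne i})\le \log k$ and $H(d_i \mid d_id_0,(d_l)_{l\ne i})\le \log k$, but the pivotal step of your argument --- ``combine these conditional bounds with the subadditivity of entropy over the primes, reducing the estimation of $\log|U_g|$ to a finite-dimensional concave optimization'' --- does not go through, and this is exactly where the content of the theorem lies. Subadditivity gives $H(X)\le\sum_{p^v\| n}H(X_p)$ for the local configurations $X_p$, while the chain rule turns each of your constraints into a bound on local entropies conditioned on the \emph{global} configuration. Since the uniform measure on $U_g$ need not resemble a product measure across primes, these fully conditioned local entropies can all vanish while every marginal $H(X_p)$ stays maximal, so no per-prime saving follows and there is no legitimate passage to the per-prime optimization you invoke. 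The paper's mechanism is different in kind: it introduces an additive function $h_{\alpha,j,n}$ with $h_{\alpha,j,n}(p^{i})=j\log\frac{\alpha jv+1}{1-\alpha}$ for $p^{v}\|n$, proves Chernoff--Rankin bounds $S^{\mp}_{j}\le\kappa_{j}(n)\exp\bigl(-\sum_{p^{v}\|n}f_{\alpha}(jv)\bigr)$ for the number of regular tuples whose $h$-value deviates below or above the mean $A_{\alpha,j}(n)$, and then uses a pigeonhole dichotomy: for every tuple of $U_g$, either one of the $j+1$ products $d_1\cdots d_j$ and $d_1\cdots d_jg_{j,n}/d_i$ is $h$-small, or $d_1\cdots d_{j-1}(d_jg_{j,n})$ is $h$-large, and Conditions 1 and 2 guarantee that each deviating configuration is hit at most $k$ times. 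The dichotomy is valid precisely when $(1+\alpha)j\le(1-\alpha)(j+1)$, i.e.\ $\alpha\le\frac{1}{2j+1}$; so the parameter in Lemma \ref{lem:4} is a deviation threshold, not, as you write, a ``proportion of primes routed to the distinguished block $d_0$.'' You also never invoke the tensor-power trick (Lemmas \ref{lem:1}--\ref{lem:3}), which is what removes the factor $(j+1)k+1$ produced by the dichotomy and leaves the stated constant $k$.

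Your plan for part (b) caps strictly below the theorem. Within the symmetric dichotomy the admissible range is $\alpha\le\frac15$, and maximizing $f_{\alpha}(2)/\log 3$ there yields only $\delta_2\approx 0.0346$; no optimization ``over the admissible range of $s$'' can reach $0.045072$. The paper's gain comes from enlarging the admissible range with an asymmetric weight $r$: if none of $h(d_1d_2)$, $h(d_1g_{2,n})$, $h(d_2g_{2,n})$ is small, an averaging argument shows that one of the two combinations $\frac{h(d_ig_{2,n})}{1+r}+\frac{rh(d_{3-i})}{1+r}$ is at least $(1+\beta)A_{\alpha,2}(n)$ with $\beta=\frac{2+r}{1+r}(1-\alpha)-1$, which permits $\alpha=0.2288\ldots>\frac15$. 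The price is that the upper-deviation inequality $\xi(v,\alpha,\beta,2,r)/\log(2v+1)\ge\delta$ no longer follows from a clean monotonicity statement like Lemma \ref{lem:4}(ii) and is certified by machine for $v<10^{6}$ together with an asymptotic estimate beyond; so your instinct that the extremality check across prime powers is delicate is well placed, but the missing idea is the $r$-weighted two-sided deviation scheme itself, without which the one-parameter problem you pose cannot produce the constant $\delta=0.045072$.
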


In some cases, Theorem \ref{thm:2} is sharper than Theorem \ref{thm:1}.

\begin{thm}\label{thm:2}
\begin{enumerate}
\item[{\bf (a)}] Let $n \ge 1$ be a squarefree integer. Then
\begin{equation}\label{eq:thm:2}
E^{*}_{j,k}(n) \le k\Bigl(\frac{j+2}{2^{\frac{2}{j+2}}}\Bigr)^{\omega(n)}.
\end{equation}

\item[{\bf (b)}] For each integer $n \ge 1$,
$$
E_{j,k}(n) \le k\prod_{p^{v}\| n}\bigl((j+1)v^{\frac{j}{j+1}}\bigr).
$$
\end{enumerate}
\end{thm}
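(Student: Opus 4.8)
The plan is to lift each $j$-tuple $(d_{1},\dots,d_{j})\in U_{g}$ to the $(j+1)$-tuple $(d_{0},d_{1},\dots,d_{j})$ with $d_{0}:=g_{j,n}(d_{1},\dots,d_{j})$. Since $g_{j,n}$ is $k$-regular, $d_{0}$ is coprime to every $d_{i}$ and the $d_{i}$ are pairwise coprime, so this is a regular $(j+1)$-tuple and the lift is injective; hence $|U_{g}|$ is the number of such tuples that occur, a quantity bounded a priori by $\kappa_{j+1}(n)=\prod_{p^{v}\|n}((j+1)v+1)$. The whole point is to beat this by the savings hidden in conditions 1--3, which I would read as fibre bounds for several maps on $U_{g}$: for each $i$, the \emph{drop} map $(d_{1},\dots,d_{j})\mapsto(d_{0},d_{1},\dots,\widehat{d_{i}},\dots,d_{j})$ and the \emph{merge} map $(d_{1},\dots,d_{j})\mapsto(d_{i}d_{0},d_{1},\dots,\widehat{d_{i}},\dots,d_{j})$ are both at most $k$-to-one, while condition 3 makes the \emph{pair-merge} map $(d_{1},\dots,d_{j})\mapsto(d_{0},d_{i_{1}}d_{i_{2}},\dots)$ at most $k$-to-one.

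The mechanism I would use is the information-theoretic (equivalently Rankin) form of these bounds. Let $D=(d_{0},\dots,d_{j})$ be uniform on $U_{g}$, so $H(D)=H(d_{1},\dots,d_{j})=\log|U_{g}|$. Every view $V$ above is an at most $k$-to-one image, hence $H(D)\le H(V)+\log k$; and one also has the trivial identity $H(D)=H(d_{1},\dots,d_{j})\le\sum_{p^{v}\|n}H(\cdot)$, which carries no factor of $k$. Because $D$ is pairwise coprime its exponent pattern decomposes over the primes $p^{v}\|n$, so subadditivity gives $H(V)\le\sum_{p^{v}\|n}H(V_{p})$ for the local part $V_{p}$. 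Forming a convex combination of all these inequalities with nonnegative coefficients summing to $1$, the left side stays $H(D)$ and the right side becomes $\sum_{p^{v}\|n}L_{p}+\theta\log k$ with $\theta\le 1$ (the defect $1-\theta$ being the weight placed on the trivial, $k$-free view); since $k\ge 1$ we may replace $\theta\log k$ by $\log k$. This is exactly how a single factor $k$ is produced while genuinely using several conditions, and it reduces the theorem to a \emph{per-prime} problem: choose the coefficients so that, uniformly over all local distributions, $L_{p}$ is at most the logarithm of the claimed local factor.

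For part (b) the relevant views are the drop and merge maps of conditions 1 and 2, and $L_{p}$ is a weighted combination of the entropies of the ``kept'', ``merged'', and ``full'' parts of the local exponent. Its minimax value is most transparent in the dual Rankin form, where the optimal local factor has the shape $\min_{x>0}\bigl(x^{-1}+jv\,x^{1/j}\bigr)$; the minimizer $x=v^{-j/(j+1)}$ gives the value $(j+1)v^{j/(j+1)}$ (note $x=1$ recovers the weaker $jv+1$, i.e. $k\kappa_{j}(n)$, so the improvement is exactly the effect of optimizing $x$). Multiplying over primes and carrying the single $k$ yields $E_{j,k}(n)\le k\prod_{p^{v}\|n}((j+1)v^{j/(j+1)})$.

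For part (a) one has in addition the pair-merge views of condition 3 (strong regularity), and squarefreeness forces every local exponent to be $0$ or $1$, so a tuple is merely a coloring of the primes by $\{0,1,\dots,j,*\}$ and the pair-merge view controls the binary choice of distributing one prime between two slots. Running the same optimization with the degree-two views included, the per-prime bound becomes $\frac{j+2}{2^{2/(j+2)}}$, the base $2$ and exponent $\frac{2}{j+2}$ reflecting a single prime split two ways among the $j+2$ local options. The main obstacle, in both parts, is this per-prime optimization: one must verify that the constrained local minimax equals the stated closed form while simultaneously the coefficients can be taken with sum $1$, so that exactly one $k$ survives. This is the convexity computation that balances the competing views at each prime, and it is precisely where the explicit optimization underlying Theorem~\ref{thm:1} (the function $f_{s}$ of Lemma~\ref{lem:4}) enters.
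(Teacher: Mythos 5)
Your architecture---lift each tuple to $(d_{0},d_{1},\dots,d_{j})$ with $d_{0}=g_{j,n}(d_{1},\dots,d_{j})$, read conditions 1--3 as $k$-to-one ``views'', factor entropy over the primes $p^{v}\|n$, and optimize a convex combination---is a reasonable abstraction, but the proof's heart is exactly the step you defer, and as you have set it up that step is \emph{false}. In your scheme the per-prime bound must hold uniformly over all local marginals $\mu$ on the slots $\{0,1,\dots,j,*\}$ (where $*$ is the unused slot, i.e.\ the cofactor $e$ defined by $n=d_{1}\cdots d_{j}g_{j,n}(d_{1},\dots,d_{j})e$). Every view you list (drop $d_{i}$, merge $d_{i}d_{0}$, pair-merge $d_{i_{1}}d_{i_{2}}$) locally identifies a pair of slots that always includes some $i\in\{1,\dots,j\}$. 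So take $\mu$ concentrated on the two slots $\{0,*\}$: each available identification merges a zero-mass atom and leaves the entropy untouched, giving local value $H(\mu)=\log 2$, which for $j=1$ in part (a) exceeds the target $\log\bigl(3\cdot 2^{-2/3}\bigr)\approx 0.6365$; similarly in part (b) with $j=1$, mass spread over $\{*\}\cup\{(0,a):1\le a\le v\}$ gives local value $\log(v+1)>\log(2v^{1/2})$ for $v\ge 2$. What rescues this is a view you state ($H(D)=H(d_{1},\dots,d_{j})$, since $d_{0}$ is a \emph{function} of the rest) but never feed into the combination: forgetting $d_{0}$ is injective, hence a $k$-free view that locally identifies the slot $0$ with $*$. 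Even after adding it, you would still have to prove that the uniform distribution (which does attain $\log\frac{j+2}{2^{2/(j+2)}}$, resp.\ $\log\bigl((j+1)v^{j/(j+1)}\bigr)$) is extremal for the resulting minimax---a nontrivial concavity analysis that is entirely absent.

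The paper avoids this local extremal problem altogether. For (a) it works with the $j+2$ parts $d_{1},\dots,d_{j},g_{j,n}(d_{1},\dots,d_{j}),e$ of \eqref{fac_n}, observes via conditions 1--3 that any $j$ of them determine the tuple up to $k$ solutions, and then chooses \emph{per tuple} the $j$-subset carrying at most $\frac{j}{j+2}\omega(n)$ primes---a union bound over $\binom{j+2}{j}$ cases, a mechanism a single global convex combination cannot imitate---bounding $\binom{j+2}{j}\sum_{i\le j\omega(n)/(j+2)}\binom{\omega(n)}{i}j^{i}$ by Stirling and removing the constant by the tensor-power trick (Lemmas \ref{lem:1}--\ref{lem:3} and Remark \ref{rem_*}). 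For (b) it is a one-parameter Rankin argument with the weight $T(d)=\prod_{p\mid d,\,p^{v}\|n}v^{-1}$: from $T(d_{1})\cdots T(d_{j})T(g_{j,n})\ge T(n)$, some $j$ of the $j+1$ factors have product at least $T(n)^{j/(j+1)}$, only condition 1 is used (not condition 2, as you suggest), and $\sum T(d_{1})\cdots T(d_{j})=\prod_{p^{v}\|n}(j+1)$ over regular tuples yields the local factor $(j+1)v^{j/(j+1)}$. Your $\min_{x>0}\bigl(x^{-1}+jvx^{1/j}\bigr)$ is indeed the same optimization, but in the paper it is one global choice of Rankin weight, verified by a two-line computation, not an unproven per-prime entropy minimax. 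So: right skeleton and correct identification of the $k$-to-one maps, but the central local inequality is missing, and the view inventory as written makes it provably fail.
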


The next result concerns the {\it additive energy} of the set $\mathcal{D}_{n}$ defined by
\begin{equation}
\mathsf{E}(\mathcal{D}_{n}):=|\{(d_{1},d_{2},d_{3},d_{4}) \in \mathcal{D}_{n}^{4}:\ d_{1}+d_{2}=d_{3}+d_{4}\}|.
\end{equation}
On one hand, the multiplicative energy of $\mathcal{D}_{n}$, as defined in \cite{pl}, is well understood; see Theorem $1.4$ of \cite{pl}. This is not surprising, considering the multiplicative structure of the elements of $\mathcal{D}_{n}$. On the other hand, the additive energy is much harder to approach. A well-known heuristic argument (see \cite{pe:cls:rt} and \cite{jcl:ks}) suggests that, when adapted to our situation, one might expect
$$
\mathsf{E}(\mathcal{D}_{n}) = 2\tau(n)^{2}+O(\tau(n)\exp(\omega(n)^{3/4+\epsilon}))
$$
for each $\epsilon > 0$, as $\omega(n)$ goes to infinity. The best-known results yield an error term of $O(\tau(n)C^{\omega(n)})$ for some large constant $C$. Theorem 3 of \cite{jhe:2} establishes the constant $C=(2^{35}\cdot 9)^{27}$.

\begin{thm}\label{thm:3}
\begin{enumerate}
\item[{\bf (a)}] Let $n \ge 1$ be a squarefree integer. We have 
$$
\mathsf{E}(\mathcal{D}_{n}) \ll (7.8784716)^{\omega(n)}.
$$
\item[{\bf (b)}] For each integer $n \ge 2$, 
$$
\mathsf{E}(\mathcal{D}_{n}) \ll \frac{\tau(n)^{3}}{\sqrt{\Omega_{2}(n)}}.
$$
\end{enumerate}
\end{thm}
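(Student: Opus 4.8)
The plan is to bound $\mathsf{E}(\mathcal{D}_{n})$ by transferring the additive relation $d_{1}+d_{2}=d_{3}+d_{4}$ into the framework of strongly $k$-regular functions and then invoking Theorem \ref{thm:2}. The basic building block is the summation map $g_{2,n}(z_{1},z_{2})=z_{1}+z_{2}$, defined on coprime pairs of divisors $(z_{1},z_{2})$ whose sum again divides $n$. Since $\gcd(z_{1},z_{2})=1$ automatically gives $\gcd(z_{1}+z_{2},z_{i})=1$, its domain is regular, and the map is strongly $2$-regular: condition 1 amounts to solving the single linear equation $z+c=d$, while conditions 2 and 3 amount to solving a quadratic (fixing both $z_{1}+z_{2}$ and $z_{1}z_{2}$ pins down the unordered pair). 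Thus the coprime, divisor-valued three-term count is controlled by $E^{*}_{2,2}(n)$.

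The essential difficulty is that in $\mathsf{E}(\mathcal{D}_{n})$ the common value $m=d_{1}+d_{2}=d_{3}+d_{4}$ need not divide $n$, so the summation map is not directly divisor-valued. I would first reduce to primitive quadruples by extracting $g=\gcd(d_{1},d_{2},d_{3},d_{4})$ and writing $d_{i}=ge_{i}$ with $e_{1}+e_{2}=e_{3}+e_{4}$; then I would extract the two side-gcds $a=\gcd(e_{1},e_{2})$ and $b=\gcd(e_{3},e_{4})$, which are coprime because any common prime would divide all four $e_{i}$. This yields the factorisation $m=gabs$ and splits each quadruple into two genuinely coprime pairs linked only through $s$ and the cross coprimality $\gcd(a,b)=1$. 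Bounding the two coprime pairs by the strongly $k$-regular estimate of Theorem \ref{thm:2} and summing over the parameters $g,a,b,s$ (with a Cauchy--Schwarz step to decouple the two sides) then produces the desired estimates.

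For part (a) with $n$ squarefree, I would feed each side into Theorem \ref{thm:2}(a) and carry the cross coprimality $\gcd(a,b)=1$ through the per-prime bookkeeping; the stated constant $7.8784716$ is the value of the resulting numerically optimised per-prime contribution (a refinement of the bound $8^{\omega(n)}$ that one gets by naively squaring the three-term count). For part (b) I would instead apply Theorem \ref{thm:2}(b). The trivial bound $\mathsf{E}(\mathcal{D}_{n})\le\tau(n)^{3}$ must be improved by the factor $\sqrt{\Omega_{2}(n)}$, which I would extract from a second-moment (Cauchy--Schwarz) argument over the prime powers $p^{v}\|n$ that witness the additive relation, using the inequality $\sqrt{\Omega_{2}(n)}\ge\Omega(n)/\sqrt{\omega(n)}$ recorded in the introduction.

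The main obstacle, in both parts, is the coprime decomposition together with the verification of strong $k$-regularity for the maps it produces, and above all the control of the summation over the extracted gcd-parameters $g,a,b,s$ so that the saving coming from Theorem \ref{thm:2} is not eroded. For part (b) the delicate point is the precise second-moment step that converts the arithmetic rigidity of the equation at a single prime power into the global factor $1/\sqrt{\Omega_{2}(n)}$.
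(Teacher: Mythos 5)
Your proposal correctly identifies the central obstacle---that $m=d_{1}+d_{2}$ need not divide $n$---but the gcd extraction you propose does not resolve it, and this is fatal to the plan of invoking Theorem \ref{thm:2}. After writing $d_{i}=gau_{i}$, the reduced equation is $u_{1}+u_{2}=bs$ where $s$ is an arbitrary cofactor with no divisibility relation to $n$; the map $(u_{1},u_{2})\mapsto u_{1}+u_{2}$ therefore still takes values outside $\mathcal{D}_{n}$, so there is no strongly $k$-regular function (whose codomain must be $\mathcal{D}_{n}$ by definition) to which Theorem \ref{thm:2}a or \ref{thm:2}b can be applied. Your observation that $z_{1}+z_{2}$ is strongly $2$-regular \emph{where the sum divides $n$} is correct (the paper notes this in the remark after Corollary \ref{cor:1}), but that hypothesis is exactly what fails for almost all quadruples counted by $\mathsf{E}(\mathcal{D}_{n})$; moreover $s$ ranges over roughly $n/(gab)$ values, so any union bound over $s$ erodes the exponential saving completely. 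The paper's actual route is different in kind: writing $e=\gcd(d_{1}+d_{2},n)$ and $d=\gcd(d_{1},d_{2})$, it observes that for a \emph{fixed} sum $me$ one has $\frac{d_{1}}{d}\bigl(\frac{me}{d}-\frac{d_{1}}{d}\bigr)\mid\frac{n}{e}$, which reduces the count to the one-variable ($j=1$) theory (Th\'eor\`eme 2a of de la Breteche), giving $U(e,m)\le 2^{c\omega(n)+(1-c)\omega(e)}$ with $c=\frac{\log 3}{\log 2}-\frac{2}{3}$; the sum over $m$ is handled not by regularity but by the elementary count $\sum_{m}U(e,m)\le 3^{\omega(n)}(2/3)^{\omega(e)}$, and then $\mathsf{E}(\mathcal{D}_{n})=\sum_{e\mid n}\sum_{m}U(e,m)^{2}\le\sum_{e\mid n}\bigl(\max_{m}U(e,m)\bigr)\sum_{m}U(e,m)$ is split at $\omega(e)\ge(1-\eta)\omega(n)$ with $\eta=0.2702949686\dots$. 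The constant $7.8784716$ arises from balancing these two regimes, not from any per-prime optimisation of a Theorem \ref{thm:2}-type bound, so your account of where the constant comes from is also incorrect.

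For part (b) your sketch contains no workable mechanism for extracting the factor $\sqrt{\Omega_{2}(n)}$: Theorem \ref{thm:2}b is inapplicable for the same divisor-valued reason, and a ``second-moment argument over the prime powers'' together with $\sqrt{\Omega_{2}(n)}\ge\Omega(n)/\sqrt{\omega(n)}$ is not a substitute for the key idea, which is dyadic localisation. The paper notes that in $d_{1}+d_{2}=m$ the larger divisor satisfies $\frac{m}{2}\le\max(d_{1},d_{2})\le m-1$ and determines the pair up to a factor $2$, whence $r_{n}(m)\ll\Delta(n)$ uniformly in $m$, where $\Delta$ is the Erd\H os--Hooley function; Lemma \ref{lem:6} (proved via the antichain theorem of de Bruijn--van Ebbenhorst Tengbergen--Kruyswijk combined with Anderson's variance bound) gives $\Delta(n)\ll\tau(n)/\sqrt{\Omega_{2}(n)}$, and then $\mathsf{E}(\mathcal{D}_{n})\le\bigl(\max_{m}r_{n}(m)\bigr)\sum_{m}r_{n}(m)=\bigl(\max_{m}r_{n}(m)\bigr)\tau(n)^{2}$. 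The Cauchy--Schwarz step you mention is the easy part; what is missing from your proposal is any uniform bound on $\max_{m}r_{n}(m)$, and nothing in your outline supplies one.
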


For each character $\chi$ modulo $q$, we consider the function
$$
\tau(n,\chi) := \sum_{d \mid n}\chi(d).
$$
For any integer $n$ that is coprime with $q$, we define the quantity
\begin{eqnarray*}
\mathcal{H}(n,q) & := & \frac{1}{\phi(q)}\sum_{\chi \hskip-8pt\pmod{q}}|\tau(n,\chi)|^{2}\\
& = & |\{(d_{1},d_{2}) \in \mathcal{D}_{n}^{2}:\ d_{1}\equiv d_{2}\ \pmod{q}\}|,
\end{eqnarray*}
where $\phi(q)$ denotes the Euler totient function. The next result establishes that most of the functions $\tau(n,\chi)$ are much smaller than $\tau(n)$ provided only that $q$ is large enough. Some restriction has to be expected since the prime divisors of $n$ could predominantly reside in a small multiplicative subgroup of $\mathbb{Z}/q\mathbb{Z}$.

\begin{thm}\label{thm:4}
Let $n,q \ge 2$ be fixed coprime integers. Then
$$
\mathcal{H}(n,q) \ll \bigl(\tau(n)+\tau(n)^{2-4\eta}\bigr)V(n)(\log \tau(n))^{3/2}
$$
where $\eta:=\frac{\log q}{\log n}$.
\end{thm}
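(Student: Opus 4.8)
The plan is to exploit the multiplicative structure of the divisors by decomposing a congruent pair along its gcd, thereby converting the additive condition $d_{1}\equiv d_{2}\pmod q$ into a multiplicative (subset--product) condition modulo $q$, to which the circle of ideas behind Theorem \ref{thm:1} can be brought to bear. First I would write each ordered pair $(d_{1},d_{2})\in\mathcal{D}_{n}^{2}$ as $d_{1}=eu$, $d_{2}=ew$ with $e=\gcd(d_{1},d_{2})$, $\gcd(u,w)=1$ and $euw\mid n$; since $\gcd(u,w)=1$ this is a bijection, and $\gcd(n,q)=1$ forces $\gcd(e,q)=1$, so the condition $q\mid d_{1}-d_{2}$ becomes $q\mid u-w$. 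Summing over the $\tau(n/(uw))$ admissible values of $e$ yields the exact identity
$$
\mathcal{H}(n,q)=\tau(n)+\sum_{m\mid n,\ m>1}\tau(n/m)\,C(m),\qquad C(m):=|\{(u,w):uw=m,\ \gcd(u,w)=1,\ q\mid u-w\}|.
$$
The diagonal term $\tau(n)$ is already of admissible size, so everything reduces to bounding the weighted sum of the $C(m)$.

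Next I would analyse $C(m)$. Its coprime factorizations $uw=m$ are indexed by subsets $A$ of the primes dividing $m$ through $u_{A}=\prod_{p\in A}p^{v_{p}(m)}$, and the condition $q\mid u-w$ is equivalent to $u_{A}^{2}\equiv m\pmod q$; thus $C(m)$ counts those $A$ for which $u_{A}$ is a square root of $m$ modulo $q$. Writing $N(m,c):=|\{A:u_{A}\equiv c\pmod q\}|$ and using that a unit has at most $q^{o(1)}$ square roots modulo $q$, Cauchy--Schwarz gives $C(m)\ll q^{o(1)}\bigl(\sum_{c}N(m,c)^{2}\bigr)^{1/2}$. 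The collision sum $\sum_{c}N(m,c)^{2}=|\{(A,A'):u_{A}\equiv u_{A'}\pmod q\}|$ is, after passing to the symmetric difference of $A$ and $A'$, once again a count of coprime pairs of divisors of $m$ that are congruent modulo $q$ --- an object of exactly the same shape as $\mathcal{H}(n,q)$ itself, which is what makes a self-improving treatment possible. The orthogonality bound $N(m,c)=\frac{1}{\phi(q)}\sum_{\chi}\overline{\chi(c)}\prod_{p\mid m}(1+\chi(u_{A}))$ isolates the principal character and produces the expected main term $\sum_{m}\tau(n/m)2^{\omega(m)}/\phi(q)\asymp\tau(n)^{2}/\phi(q)$, which already lies below $\tau(n)^{2-4\eta}$ in the relevant range; a dyadic decomposition according to the size of the larger divisor organises the non-principal part and is the source of the logarithmic factors.

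The decisive point, and the main obstacle, is extracting the saving $\tau(n)^{-4\eta}$ uniformly. Equidistribution of the products $u_{A}$ in the subgroup $G\le(\mathbb{Z}/q\mathbb{Z})^{*}$ generated by the $p^{v_{p}(m)}$ gives a saving proportional to $1/|G|$, which is ample precisely when the primes of $n$ are spread out modulo $q$. In the opposite worst case the primes $p\mid n$ lie in a very small subgroup modulo $q$, so each contributes almost no cancellation --- but then each such prime power satisfies a strong congruence and is therefore $\gg q=n^{\eta}$, which forces $\omega(n)\ll 1/\eta$ and hence makes $\tau(n)\le(V(n)+1)^{\omega(n)}$ so small that even the trivial bound $\mathcal{H}(n,q)\le\tau(n)^{2}$ already sits below $\tau(n)^{2-4\eta}(V(n)+1)^{O(1)}$. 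The quantitative balance between these two regimes --- carried out by a Rankin/Hölder optimisation over the number of primes allowed to be ``stuck'' modulo $q$ --- is what produces the exponent $2-4\eta$, the precise coefficient $4$ emerging from how the $q$-saving compounds through the Cauchy--Schwarz steps of the self-similar reduction, while the accumulated losses from those steps and from the dyadic decomposition account for the factors $V(n)$ (the exponents $v_{p}(m)$ range up to $V(n)$) and $(\log\tau(n))^{3/2}$. I expect this optimisation, rather than any single estimate, to be the crux: one must propagate the dependence on $q$ uniformly through the recursion so that the saving does not degrade below the claimed power.
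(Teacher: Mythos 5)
Your plan breaks at the decisive dichotomy, and the break is not repairable within your framework. You claim that if the primes dividing $n$ lie in a very small subgroup $G\le(\mathbb{Z}/q\mathbb{Z})^{*}$, then ``each such prime power satisfies a strong congruence and is therefore $\gg q=n^{\eta}$, which forces $\omega(n)\ll 1/\eta$.'' That inference is false: membership in a small subgroup places no lower bound on a prime. A small prime of small multiplicative order modulo $q$ (say $p=2$ with $q\mid 2^{k}-1$, so $2$ has order $k\asymp\log q$) lies in a subgroup of size $O(\log q)$, and $n$ may be built from many such small primes; then $\omega(n)$ is unbounded in terms of $1/\eta$, $\tau(n)$ is large, and the trivial bound $\mathcal{H}(n,q)\le\tau(n)^{2}$ is nowhere near $\tau(n)^{2-4\eta}$. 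This is exactly the hard configuration the theorem must handle (the paper even flags it before the statement), and it is the case your argument assigns to the ``trivial'' branch. The complementary branch is also not in better shape: the asserted saving $1/|G|$ for the distribution of the subset products $u_{A}$ in $G$ is essentially the statement being proved --- subset products can concentrate, and your ``self-similar reduction'' via Cauchy--Schwarz returns you to a quantity of the same shape as $\mathcal{H}$ with no decreasing parameter, so the recursion is circular. A further uniformity leak: the number of square roots of a unit modulo $q$ is $2^{\omega(q)+O(1)}$, which, while $q^{o(1)}$, can vastly exceed any power of $\log\tau(n)$ in the relevant ranges, and the target bound has only $(\log\tau(n))^{3/2}$ of slack, with no $q^{o(1)}$ to spare.

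The paper's proof runs on entirely different fuel, and it is worth seeing why the exponent is $2-4\eta$. One reduces to $\lambda_{n}:=\max_{t}|\{d\mid n:\ d\equiv t\ (\mathrm{mod}\ q)\}|$ via $\mathcal{H}(n,q)\le\lambda_{n}\tau(n)$, together with the submultiplicativity $\lambda_{ab}\le\lambda_{a}\tau(b)$ for coprime $a,b$. The pointwise input is the theorem of Lenstra and Coppersmith--Howgrave-Graham--Nagaraj (Lemma \ref{lem:8}): if $q\ge m^{1/4+\mu}$ then $\lambda_{m}\ll\mu^{-3/2}$, uniformly --- a lattice-based result with no equidistribution hypothesis, which in particular is immune to the small-subgroup obstruction. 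The coefficient $4$ in $2-4\eta$ is precisely the reciprocal of the $1/4$ there. For $q<n^{1/4}$ the point is combinatorial: writing $(\alpha_{i}+1)=\tau(n)^{\rho_{i}}$ and $p_{i}^{\alpha_{i}}=n^{\theta_{i}}$, the rearrangement lemma (Lemma \ref{lem:7}) produces an ordering with $\sum_{i\le s}\rho_{\sigma_{i}}\le\sum_{i\le s}\theta_{\sigma_{i}}$ for all $s$, which yields a factorization $n=ab$ with $a\le n^{4\eta-\epsilon}$ (so Lemma \ref{lem:8} applies to $a$ with $\mu=\epsilon/4$, giving $\lambda_{a}\ll\epsilon^{-3/2}$) and simultaneously $\tau(b)\le 2V(n)\tau(n)^{1-4\eta+\epsilon}$; choosing $\epsilon=1/\log\tau(n)$ gives the stated bound, the factor $V(n)$ absorbing the boundary prime power and $(\log\tau(n))^{3/2}$ absorbing $\epsilon^{-3/2}$. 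Your gcd decomposition and the identity $\mathcal{H}(n,q)=\tau(n)+\sum_{m\mid n,\,m>1}\tau(n/m)C(m)$ are correct as far as they go, but nothing in your outline supplies a substitute for the Lenstra--CHGN input, and no character-sum argument can, since no power saving in $q$ holds character-by-character in the worst case.
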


The next result illustrates an application of Theorem \ref{thm:1}. It can be compared with the estimate $3\cdot 7^{3}\cdot 49^{\omega(n)}\cdot \tau(n)$ which is derived from Theorem 1 in the classic paper of Evertse \cite{jhe:1}.

\begin{cor}\label{cor:1}
Let $n \ge 1$ be an integer and $\delta$ be the constant introduced in Theorem \ref{thm:1}b. Then
\begin{equation}\label{eq:1}
|\{(d_{1},d_{2},d_{3}) \in \mathcal{D}_{n}^{3}:\ d_{1}+d_{2}=d_{3}\}| \le \tau(n)^{2-\delta}.
\end{equation}
\end{cor}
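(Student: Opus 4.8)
The plan is to reduce the count to a sum over the common factor $e=\gcd(d_1,d_2)$, and to recognize each fiber of that sum as the domain of a $1$-regular function, to which Theorem \ref{thm:1}(b) applies directly.

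First I would note that since $d_3=d_1+d_2$ is determined by $d_1,d_2$, the left-hand side of \eqref{eq:1} equals $|\{(d_1,d_2)\in\mathcal{D}_n^2:\ d_1+d_2\mid n\}|$. Writing $e=\gcd(d_1,d_2)$, $a=d_1/e$ and $b=d_2/e$, so that $\gcd(a,b)=1$, the key (elementary) observation is the equivalence $ex\mid n\iff x\mid n/e$, valid whenever $e\mid n$. Applied to $x=a,b,a+b$, it turns the three conditions $d_1,d_2,d_1+d_2\mid n$ into $a,b,a+b\mid n/e$. Since $\gcd(ea,eb)=e\gcd(a,b)=e$, each pair $(d_1,d_2)$ is recorded exactly once by its gcd, and I would therefore write
$$
|\{(d_1,d_2,d_3)\in\mathcal{D}_n^3:\ d_1+d_2=d_3\}|=\sum_{e\mid n}|U^{(e)}|,\qquad U^{(e)}:=\{(a,b)\in\mathcal{D}_{n/e}^2:\ \gcd(a,b)=1,\ a+b\mid n/e\}.
$$

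Next, for each fixed $e$ I would set $N:=n/e$ and take $g_{2,N}(a,b)=a+b$ on $U^{(e)}$. The set $U^{(e)}$ is regular since $\gcd(a,b)=1$; the image lies in $\mathcal{D}_N$ because $a+b\mid N$; and $\gcd(a+b,a)=\gcd(a+b,b)=1$, so $g$ takes values coprime to both arguments. Conditions 1 and 2 hold with $k=1$: for fixed $b$ and target $d$ the equation $a+b=d$ has at most one solution, and $a(a+b)=d$ is strictly increasing in $a$ so also at most one, with the symmetric statements when $a$ is fixed. Hence $g_{2,N}$ is $1$-regular, $|U^{(e)}|\le E_{2,1}(n/e)$, and reindexing by $m=n/e$ together with Theorem \ref{thm:1}(b) (taking $k=1$) yields
$$
|\{(d_1,d_2,d_3)\in\mathcal{D}_n^3:\ d_1+d_2=d_3\}|\le\sum_{m\mid n}E_{2,1}(m)\le\sum_{m\mid n}\kappa_{2}(m)^{1-\delta}.
$$

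Finally, since $\kappa_{2}$ is multiplicative the right-hand side factors as $\prod_{p^v\|n}\bigl(\sum_{i=0}^v(2i+1)^{1-\delta}\bigr)$, so the corollary reduces to the single per-prime inequality $\sum_{i=0}^v(2i+1)^{1-\delta}\le(v+1)^{2-\delta}$ for every $v\ge 0$; granting it, the product is at most $\prod_{p^v\|n}(v+1)^{2-\delta}=\tau(n)^{2-\delta}$. I expect this elementary inequality to be the main obstacle, not because it is deep but because the margin is slim: at $\delta=0$ it is the exact identity $\sum_{i=0}^v(2i+1)=(v+1)^2$, so one is proving that raising to the power $1-\delta$ on the left loses slightly more than raising to $2-\delta$ on the right. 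I would check it by a convexity argument comparing the two sides as functions of $\delta$ near $0$, and control the regime $v\to\infty$, where it amounts to $2^{1-\delta}\le 2-\delta$; the tightest cases are small $v$, which must be verified numerically for the specific value $\delta=0.045072$ supplied by Theorem \ref{thm:1}(b).
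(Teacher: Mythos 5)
Your proposal is correct and follows essentially the same route as the paper: partition by $\gcd(d_1,d_2)$, apply Theorem \ref{thm:1}b to the $1$-regular function $g_{2,n/e}(a,b)=a+b$ on each fiber, and factor the resulting bound $\sum_{m\mid n}\kappa_{2}(m)^{1-\delta}$ multiplicatively over prime powers. The per-prime inequality you flag as the remaining obstacle is exactly the paper's Lemma \ref{lem:5} with $\beta=2-\delta$, proved there by induction and a calculus argument valid for all $1\le\beta\le 2$, so no numerical check at the specific value of $\delta$ is needed.
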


Corollary \ref{cor:2} generalizes Corollaire 2.1 of \cite{rdlb}.

\begin{cor}\label{cor:2}
Let $j \ge 1$ be a fixed integer. For each integer $n \ge 2$,
$$
E_{j,k}(n) \ll \frac{k\kappa_{j}(n)}{\Omega(n)}.
$$
\end{cor}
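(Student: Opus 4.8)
The plan is to prove the equivalent inequality $\Omega(n)\,E_{j,k}(n)\ll_j k\,\kappa_j(n)$, treating separately the primes of small and of large exponent.

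First, whenever the exponents of $n$ are not too large I would simply invoke Theorem~\ref{thm:1}a. Since $\kappa_j(n)=\prod_{p^v\|n}(jv+1)\ge (j+1)^{\omega(n)}$ while $\Omega(n)=\sum_{p^v\|n}v$, one checks that $\Omega(n)\le\kappa_j(n)^{\delta_j}$ holds unless $n$ is dominated by a few primes of very large exponent (for bounded exponents one has $\Omega(n)\ll\omega(n)\ll\log\kappa_j(n)\ll\kappa_j(n)^{\delta_j}$). In that range Theorem~\ref{thm:1}a gives $\Omega(n)E_{j,k}(n)\le k\,\kappa_j(n)^{1-\delta_j}\Omega(n)\ll k\,\kappa_j(n)$, as wanted.

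The remaining, and genuinely different, case is that of large exponents, where Theorem~\ref{thm:1}a is too weak (already for $n=p^v$ it loses a growing factor). Here I would use a contraction. Fix a prime $p$ with $p^v\|n$ and send each tuple $(d_1,\dots,d_j)\in U_g$ to the $(j+1)$-tuple obtained by deleting the $p$-part from every $d_i$ and from $g_{j,n}(d_1,\dots,d_j)$. Because the $d_i$ and $g_{j,n}(d_1,\dots,d_j)$ are pairwise coprime, the deleted $p$-power sits in at most one of these $j+1$ slots, so the image is a regular $(j+1)$-tuple of divisors of $n/p^v$; moreover, if the $p$-power sits in an argument slot $i$ then the image pins down all remaining arguments and the value $g_{j,n}(d_1,\dots,d_j)$, so condition~1 bounds the number of such preimages by $k$ for each $i$, giving at most $jk+1$ preimages in total. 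For $n=p^v$ this already yields the sharp bound $E_{j,k}(p^v)\le jk+1$, i.e. exactly the saving by a factor $\Omega(p^v)=v$ that the corollary predicts.

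I would then combine the two ingredients by induction on $\omega(n)$, peeling off the high-exponent primes one at a time through the contraction and finishing on the bounded-exponent core with Theorem~\ref{thm:1}a. The hard part is the overlap, where $n$ carries both a prime of very large exponent and many other prime factors: a crude use of the contraction replaces $\kappa_j$ by $\kappa_{j+1}$ on the surviving primes, a loss of size $\exp(O(\omega(n)))$, precisely in the regime where Theorem~\ref{thm:1}a cannot absorb it on its own. The resolution I expect is to feed the contracted configuration back into the power-saving of Theorem~\ref{thm:1}, so that the factor $\kappa_{j+1}^{-\delta_{j+1}}$ cancels the exponential loss of the contraction; verifying that the contracted family still retains enough of conditions~1--2 to license this step is the delicate point, and is where I anticipate the main difficulty.
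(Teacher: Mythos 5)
There is a genuine gap, and it sits exactly where you anticipate it: the combination step. Your first case matches the paper (its condition $\omega(n)\ge \log\Omega(n)/(\delta_{j}\log(j+1))$ is your condition $\Omega(n)\le\kappa_{j}(n)^{\delta_{j}}$ via $\kappa_{j}(n)\ge(j+1)^{\omega(n)}$), and your contraction is the right second ingredient — but you have set it up lossily, and the loss is what makes your plan unclosable. Because you record the $p$-deleted cofactor of \emph{every} slot, your image is a $(j+1)$-tuple and your count is $\kappa_{j+1}(n/p^{v})$, the $\exp(c\,\omega(n))$ loss you flag. The paper performs the same contraction without this loss by discarding the damaged slot altogether: if $p\mid d_{i}$, then $d_{1},\dots,d_{i-1},d_{i+1},\dots,d_{j}$ and the value $g_{j,n}(d_{1},\dots,d_{j})$ are all coprime to $p$, and condition 1 recovers $d_{i}$ from these $j$ quantities alone with multiplicity at most $k$ — the cofactor $d_{i}/p^{v_{p}(d_{i})}$ is redundant and should not be kept. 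Recording $(d_{1},\dots,d_{i-1},d_{i+1},\dots,d_{j},g_{j,n}(d_{1},\dots,d_{j}))$, a regular $j$-tuple of divisors of $n/p^{V(n)}$, and adding the case $p\nmid d_{1}\cdots d_{j}$, one gets the paper's inequality \eqref{c2},
$$
E_{j,k}(n) \le \frac{(jk+1)\,\kappa_{j}(n)}{jV(n)+1},
$$
with no exponential loss; this removes your ``overlap'' difficulty at the root, with no induction or peeling needed.

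Your proposed repair of the lossy version would not work, for two concrete reasons. Structurally, the contracted configuration is a multiset of regular $(j+1)$-tuples, not the set $U_{g}$ of a $k$-regular function of $j+1$ variables (over a fixed base $(e_{1},\dots,e_{j})$, preimages with different powers $p^{a}$ in an argument slot produce different last coordinates, and conditions 1--2 are not inherited), so Theorem \ref{thm:1} cannot be fed the image. Quantitatively, even granting such a step, the hoped-for cancellation fails: you would need $\kappa_{j+1}(m)^{1-\delta_{j+1}}\ll\kappa_{j}(m)$ for $m=n/p^{V(n)}$, i.e.\ per squarefree prime $(j+2)^{1-\delta_{j+1}}\le j+1$, which requires $\delta_{j+1}\ge 1-\log(j+1)/\log(j+2)$; in fact $\delta_{j+1}$ is far smaller (e.g.\ $\delta_{3}\approx 0.020$ against $1-\log 3/\log 4\approx 0.208$), leaving an uncancelled factor $C^{\omega(n)}$, $C>1$, which is a power of $\Omega(n)$ precisely when $\omega(n)\asymp\log\Omega(n)$. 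Finally, note that even with the sharp \eqref{c2} you still need a third ingredient that your plan lacks: ruling out that $\Omega(n)$ is spread over several primes of large exponent, so that $V(n)=o(\Omega(n))$. The paper handles this with Theorem \ref{thm:2}b when at least $j+1$ primes satisfy $p^{v}\|n$ with $v\ge\Omega(n)^{1/(j+1)}$, after which $\Omega(n)\le jV(n)+O(\Omega(n)^{1/(j+1)}\log\Omega(n))$ in the residual range forces $V(n)\asymp\Omega(n)$, and \eqref{c2} concludes.
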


The last corollary can be viewed as a generalization of Corollary \ref{cor:1} in the case where $n$ is squarefree. It follows easily from Theorem \ref{thm:3}.

\begin{cor}\label{cor:3}
Let $n \ge 1$ be a squarefree integer. The function
$$
G(m):=|\{(d_{1},d_{2},d_{3}) \in \mathcal{D}_{n}^3 : d_{1}+d_{2}=d_{3}+m\}|
$$
satisfies
$$
G(m) \ll (3.969502)^{\omega(n)}
$$
uniformly in $m$.
\end{cor}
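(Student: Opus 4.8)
The plan is to reduce the quantity $G(m)$ to the additive energy $\mathsf{E}(\mathcal{D}_{n})$ by a single application of the Cauchy--Schwarz inequality, and then invoke Theorem \ref{thm:3}(a) together with the identity $\tau(n)=2^{\omega(n)}$ for squarefree $n$. First I would introduce the representation function
$$
r(s):=|\{(d_{1},d_{2})\in\mathcal{D}_{n}^{2}:\ d_{1}+d_{2}=s\}|,
$$
so that by definition $\mathsf{E}(\mathcal{D}_{n})=\sum_{s}r(s)^{2}$. Rewriting the defining condition $d_{1}+d_{2}=d_{3}+m$ as $d_{1}+d_{2}=d_{3}+m$ with $d_{3}$ the free variable, one sees immediately that
$$
G(m)=\sum_{d_{3}\mid n}r(d_{3}+m).
$$

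Next I would apply Cauchy--Schwarz to the sum over $d_{3}$, treating it as a sum of $r(d_{3}+m)\cdot 1$. This gives
$$
G(m)^{2}\le\Bigl(\sum_{d_{3}\mid n}1\Bigr)\Bigl(\sum_{d_{3}\mid n}r(d_{3}+m)^{2}\Bigr)=\tau(n)\sum_{d_{3}\mid n}r(d_{3}+m)^{2}.
$$
The key observation is that as $d_{3}$ ranges over the distinct divisors of $n$, the shifted values $d_{3}+m$ are all distinct, so the inner sum is a subsum of $\sum_{s}r(s)^{2}$; hence $\sum_{d_{3}\mid n}r(d_{3}+m)^{2}\le\mathsf{E}(\mathcal{D}_{n})$ and therefore $G(m)\le\sqrt{\tau(n)\,\mathsf{E}(\mathcal{D}_{n})}$, uniformly in $m$.

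It then remains to substitute the bounds. Since $n$ is squarefree we have $\tau(n)=2^{\omega(n)}$, and Theorem \ref{thm:3}(a) gives $\mathsf{E}(\mathcal{D}_{n})\ll(7.8784716)^{\omega(n)}$, whence
$$
G(m)\ll\bigl(2\cdot 7.8784716\bigr)^{\omega(n)/2}=(15.7569432)^{\omega(n)/2}=(3.969502)^{\omega(n)},
$$
using that $\sqrt{15.7569432}=3.969502\ldots$, which matches the stated constant. I expect no real obstacle here: the entire content is the Cauchy--Schwarz step converting a three-variable shifted count into the four-variable energy, and the only point requiring a moment's care is the injectivity of $d_{3}\mapsto d_{3}+m$ that legitimizes bounding the truncated sum $\sum_{d_{3}\mid n}r(d_{3}+m)^{2}$ by the full energy $\mathsf{E}(\mathcal{D}_{n})$. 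The numerical constant is then forced by the arithmetic above.
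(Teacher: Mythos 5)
Your proof is correct, and it reaches exactly the paper's intermediate bound $G(m)\le\sqrt{\tau(n)\,\mathsf{E}(\mathcal{D}_{n})}$, but by a more elementary route. The paper works on the Fourier side: it sets $W_{n}(\theta):=\sum_{d\mid n}e(\theta d)$, writes
$$
G(m)=\int_{0}^{1}W_{n}^{2}(\theta)\overline{W}_{n}(\theta)e(-m\theta)\,d\theta\le\int_{0}^{1}|W_{n}(\theta)|^{3}\,d\theta,
$$
and then applies Cauchy--Schwarz to the $L^{3}$-norm, using $\int_{0}^{1}|W_{n}|^{2}=\tau(n)$ and $\int_{0}^{1}|W_{n}|^{4}=\mathsf{E}(\mathcal{D}_{n})$. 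By Parseval your physical-space argument is precisely the dual of this: your decomposition $G(m)=\sum_{d_{3}\mid n}r(d_{3}+m)$ followed by Cauchy--Schwarz over $d_{3}$ is the counting-side version of the $L^{3}\le L^{2\,1/2}L^{4\,1/2}$ step, and the two yield the identical constant $\sqrt{2\cdot 7.8784716}$. One advantage of your version is that the point needing care --- the injectivity of $d_{3}\mapsto d_{3}+m$, which lets you majorize the truncated sum by the full energy --- is made explicit, whereas in the Fourier formulation it is absorbed invisibly into the triangle inequality $|e(-m\theta)|\le 1$; your version also dispenses with exponential sums altogether. One microscopic numerical remark: $\sqrt{15.7569432}=3.9695016\ldots$, slightly \emph{below} the stated $3.969502$, so your final substitution is safe (the paper's constant is a rounding up), though your claim that $\sqrt{15.7569432}=3.969502\ldots$ is off in the sixth decimal.
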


\section{Preliminary lemmas}

Let us write the factorization of the integer $n$ into distinct prime numbers as
$$
n=p_{1}^{v_{1}}\cdots p_{\omega(n)}^{v_{\omega(n)}}
$$
where $\nu(n):=(v_{1},\dots,v_{\omega(n)})$ satisfies $v_{1} \ge \cdots \ge v_{\omega(n)}$. The vector $\nu(n)$ is uniquely defined, even if there could be many such factorization of $n$.

\begin{lem}\label{lem:1}
Let $j,k \ge 1$ be fixed integers and $n \ge 1$ be a generic integer. Then, $E_{j,k}(n)$ depends only on $\nu(n)$.
\end{lem}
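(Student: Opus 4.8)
The plan is to prove the equivalent statement that $\nu(n) = \nu(m)$ implies $E_{j,k}(n) = E_{j,k}(m)$, by transporting $k$-regular maps across a multiplicative bijection of the two divisor sets. Write $n = p_{1}^{v_{1}} \cdots p_{r}^{v_{r}}$ and $m = q_{1}^{v_{1}} \cdots q_{r}^{v_{r}}$ with the primes ordered so that the common exponent vector $\nu(n) = \nu(m) = (v_{1}, \ldots, v_{r})$ is matched term by term, and consider the induced map $\phi : \mathcal{D}_{n} \to \mathcal{D}_{m}$ sending $\prod_{i} p_{i}^{a_{i}} \mapsto \prod_{i} q_{i}^{a_{i}}$ for $0 \le a_{i} \le v_{i}$. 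This $\phi$ is a bijection and is precisely the lattice isomorphism one expects: it preserves divisibility, $\gcd$ and $\operatorname{lcm}$, it sends coprime pairs to coprime pairs, and whenever $d,d'$ are coprime divisors of $n$ one has $\phi(dd') = \phi(d)\,\phi(d')$.

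First I would fix a $k$-regular map $g_{j,n} : U_{g} \to \mathcal{D}_{n}$ realising a function of $\mathcal{E}_{j,k}$ and define its transport $g_{j,m}$ on the image set $U_{g}' := \{(\phi(d_{1}), \ldots, \phi(d_{j})) : (d_{1}, \ldots, d_{j}) \in U_{g}\} \subseteq \mathcal{D}_{m}^{j}$ by $g_{j,m}(\phi(d_{1}), \ldots, \phi(d_{j})) := \phi(g_{j,n}(d_{1}, \ldots, d_{j}))$. Since $\phi$ is a bijection this is well defined and $|U_{g}'| = |U_{g}|$; moreover $U_{g}'$ is regular because $\phi$ preserves coprimality, and $\gcd(g_{j,m}(\ldots), \phi(d_{i})) = 1$ follows from $\gcd(g_{j,n}(\ldots), d_{i}) = 1$ for the same reason.

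Next I would verify that $g_{j,m}$ inherits $k$-regularity. For Condition 1, fixing all but the $i$-th entry and a target $d \in \mathcal{D}_{m}$, every admissible entry $z' \in \mathcal{D}_{m}$ is uniquely of the form $\phi(z)$, and the equation $g_{j,m}(\ldots, z', \ldots) = d$ is equivalent, upon applying $\phi^{-1}$, to $g_{j,n}(\ldots, z, \ldots) = \phi^{-1}(d)$; injectivity of $\phi$ then transfers the bound of $k$ solutions verbatim. Condition 2 is handled identically, the one extra point being that the factors $z$ and $g_{j,n}(\ldots)$ are coprime, so $\phi(z\cdot g_{j,n}(\ldots)) = \phi(z)\,\phi(g_{j,n}(\ldots))$ and the equation $z'\,g_{j,m}(\ldots, z', \ldots) = d$ again collapses to its counterpart over $\mathcal{D}_{n}$. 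Hence $g_{j,m}$ is $k$-regular, giving $E_{j,k}(m) \ge |U_{g}'| = |U_{g}|$; taking the maximum over $g$ yields $E_{j,k}(m) \ge E_{j,k}(n)$, and the reverse inequality follows by symmetry using $\phi^{-1}$.

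The verification is routine once $\phi$ is in place, but the point I would flag as the crux is conceptual rather than computational: $\phi$ does \emph{not} respect addition, so the transported map $g_{j,m} = \phi \circ g_{j,n} \circ \phi^{-1}$ generally carries no simple arithmetic description, and its membership in $\mathcal{E}_{j,k}$ must be read off purely from Conditions 1 and 2, which are phrased multiplicatively and are therefore preserved by $\phi$. The only step where this preservation is not automatic is Condition 2, where one uses the coprimality of $z$ and $g_{j,n}(\ldots)$ --- guaranteed by the defining property of a $k$-regular map --- to push the product through $\phi$; the very same remark covers Condition 3, so the identical argument records the analogous statement for $E^{*}_{j,k}(n)$.
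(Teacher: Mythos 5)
Your proposal is correct and is essentially the paper's own argument: the paper likewise transports a $k$-regular map realising $E_{j,k}(n)$ through the prime-matching bijection $d \mapsto \tilde{d}$ (your $\phi$) and concludes by symmetry. Your verification of Conditions 1--2 (in particular using coprimality of $z$ and $g_{j,n}(\ldots)$ to push the product through $\phi$ in Condition 2) simply spells out the details the paper leaves as ``we verify that $g_{j,m}$ satisfies the desired conditions.''
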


\begin{proof}
Let $m$ be another integer for which $\nu(m)=\nu(n)$ and $g_{j,n}$ be a $k$-regular function that satisfies $F_{j,k}(n)=E_{j,k}(n)$. We write 
$$
n=p_{1}^{v_{1}}\cdots p_{\omega(n)}^{v_{\omega(n)}}\quad\mbox{and}\quad m=q_{1}^{v_{1}}\cdots q_{\omega(n)}^{v_{\omega(n)}}.
$$
For each divisor $d$ of $n$ of the form $d=p_{1}^{\gamma_{1}}\cdots p_{\omega(n)}^{\gamma_{\omega(n)}}$ with $0 \le \gamma_{i} \le v_{i}$ we denote by $\tilde{d}$ the integer $q_{1}^{\gamma_{1}}\cdots q_{\omega(n)}^{\gamma_{\omega(n)}}$. Thus, we can define the function
$$
g_{j,m}(\tilde{d}_{1},\dots,\tilde{d}_{j}):= \tilde{g}_{j,n}(d_{1},\dots,d_{j}).
$$
We verify that $g_{j,m}$ satisfies the desired conditions and we find $E_{j,k}(n) \le E_{j,k}(m)$. The result follows.
\end{proof}

\begin{lem}\label{lem:2}
Let $m,n \ge 1$ be coprime integers. Then
$$
E_{j,k_{1}}(n)E_{j,k_{2}}(m) \le E_{j,k_{1}k_{2}}(nm).
$$
\end{lem}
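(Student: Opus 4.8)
The plan is to build, out of near-optimal functions for $n$ and $m$ separately, a single $(k_1 k_2)$-regular function for $nm$ whose domain is as large as the product of the two domains. Let $g_{j,n} \colon U_g \to \mathcal{D}_n$ be a $k_1$-regular function attaining $|U_g| = E_{j,k_1}(n)$, and let $h_{j,m} \colon U_h \to \mathcal{D}_m$ be a $k_2$-regular function attaining $|U_h| = E_{j,k_2}(m)$. Because $\gcd(n,m) = 1$, every divisor $c$ of $nm$ factors uniquely as $c = c' c''$ with $c' \mid n$ and $c'' \mid m$, and the map $(c', c'') \mapsto c' c''$ is a bijection from $\mathcal{D}_n \times \mathcal{D}_m$ onto $\mathcal{D}_{nm}$. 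I would use this to define
$$ U := \{(a_1 b_1, \dots, a_j b_j) : (a_1,\dots,a_j) \in U_g,\ (b_1,\dots,b_j) \in U_h \} \subseteq \mathcal{D}_{nm}^j $$
together with
$$ g_{j,nm}(a_1 b_1, \dots, a_j b_j) := g_{j,n}(a_1, \dots, a_j) \cdot h_{j,m}(b_1, \dots, b_j). $$
Since each coordinate $a_i b_i$ determines $a_i$ and $b_i$ uniquely, the assignment $((a_i),(b_i)) \mapsto (a_i b_i)$ is injective, so $|U| = |U_g|\cdot|U_h| = E_{j,k_1}(n) E_{j,k_2}(m)$, and it remains only to check that $g_{j,nm}$ is $(k_1 k_2)$-regular.

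First I would verify that $U$ is regular and that the image-coprimality requirement holds. For $i_1 \ne i_2$ one has $\gcd(a_{i_1} b_{i_1}, a_{i_2} b_{i_2}) = \gcd(a_{i_1}, a_{i_2}) \gcd(b_{i_1}, b_{i_2})$, because a factor dividing $n$ is coprime to a factor dividing $m$; both factors on the right equal $1$ by the regularity of $U_g$ and $U_h$, so $U$ is regular. The same splitting gives $\gcd(g_{j,nm}(\dots), a_i b_i) = \gcd(g_{j,n}(\dots), a_i)\gcd(h_{j,m}(\dots), b_i) = 1$, using the image-coprimality built into the definitions of $k_1$- and $k_2$-regularity.

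The crux is Conditions 1 and 2, and the key observation—which I expect to be the main point to get right—is that because $g_{j,n}(\dots) \mid n$ and $h_{j,m}(\dots) \mid m$ are coprime, any equation posed over $\mathcal{D}_{nm}$ decouples into two independent equations, one over $\mathcal{D}_n$ and one over $\mathcal{D}_m$. Concretely, fix an index $i$, the remaining coordinates, and a target $d = d' d''$; writing the free variable as $z = z' z''$, the equation $g_{j,nm}(\dots, z, \dots) = d$ is equivalent to the pair $g_{j,n}(\dots, z', \dots) = d'$ and $h_{j,m}(\dots, z'', \dots) = d''$. Condition 1 for $g_{j,n}$ bounds the number of admissible $z'$ by $k_1$ and Condition 1 for $h_{j,m}$ bounds the number of admissible $z''$ by $k_2$, so there are at most $k_1 k_2$ solutions $z$. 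For Condition 2 the same argument applies after noting $z\,g_{j,nm}(\dots) = \bigl(z' g_{j,n}(\dots)\bigr)\bigl(z'' h_{j,m}(\dots)\bigr)$, so $z\,g_{j,nm}(\dots) = d$ splits as $z' g_{j,n}(\dots) = d'$ and $z'' h_{j,m}(\dots) = d''$, again yielding at most $k_1 k_2$ solutions. Hence $g_{j,nm}$ is $(k_1 k_2)$-regular, and therefore $E_{j,k_1 k_2}(nm) \ge |U| = E_{j,k_1}(n) E_{j,k_2}(m)$, which is the claim.
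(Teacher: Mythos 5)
Your proof is correct and takes essentially the same approach as the paper: the paper's proof consists of exactly your construction, defining $g_{j,nm}(d_{1}e_{1},\dots,d_{j}e_{j}):=g_{j,n}(d_{1},\dots,d_{j})\,g_{j,m}(e_{1},\dots,e_{j})$ on the product domain and stating that one verifies $k_{1}k_{2}$-regularity. The coprime-decoupling argument you spell out for Conditions 1 and 2 (splitting each divisor of $nm$ uniquely into its $n$-part and $m$-part) is precisely the verification the paper leaves to the reader.
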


\begin{proof}
Let $g_{j,n}$ and $g_{j,m}$ be functions that realize $F_{j,k_{1}}(n)=E_{j,k_{1}}(n)$ and $F_{j,k_{2}}(m)=E_{j,k_{2}}(m)$ respectively. For each $(d_{1},\dots,d_{j}) \in U_{g_{j,n}}$ and $(e_{1},\dots,e_{j}) \in U_{g_{j,m}}$ we define
$$
g_{j,nm}(d_{1}e_{1},\dots,d_{j}e_{j}):=g_{j,n}(d_{1},\dots,d_{j})g_{j,m}(e_{1},\dots,e_{j}).
$$
We verify that $g_{j,nm}$ is $k_{1}k_{2}$-regular and the result follows.
\end{proof}

\begin{lem}\label{lem:3}
Let $j \ge 1$ be a fixed integer. Assume that for each $k,n \ge 1$ we have
$$
E_{j,k}(n) \le Ck\Upsilon(n)
$$
for some constant $C \ge 1$ and some multiplicative function $\Upsilon$ that depends only on $\nu(n)$. Then
$$
E_{j,k}(n) \le k\Upsilon(n).
$$
\end{lem}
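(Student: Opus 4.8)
The plan is to use a tensor-power (amplification) argument that removes the constant $C$ by exploiting simultaneously the submultiplicativity of Lemma \ref{lem:2}, the invariance under $\nu$ from Lemma \ref{lem:1}, and the multiplicativity of $\Upsilon$. The underlying idea is that $C$ enters only as an overall factor, so after raising a suitable inequality to a high power and extracting roots, the contribution $C^{1/r}$ tends to $1$.

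First I would fix $n$ and a large integer $r$, and construct $r$ pairwise coprime integers $n_{1},\dots,n_{r}$, each sharing the exponent vector of $n$, that is $\nu(n_{i})=\nu(n)$ for every $i$. This is always possible since there are infinitely many primes: one assigns to each $n_{i}$ a fresh block of $\omega(n)$ primes carrying the exponents $v_{1},\dots,v_{\omega(n)}$. Writing $N:=n_{1}\cdots n_{r}$ and iterating Lemma \ref{lem:2} across these pairwise coprime factors yields
$$
\prod_{i=1}^{r}E_{j,k}(n_{i}) \le E_{j,k^{r}}(N).
$$
Because each $n_{i}$ has the same exponent vector as $n$, Lemma \ref{lem:1} gives $E_{j,k}(n_{i})=E_{j,k}(n)$, and hence $E_{j,k}(n)^{r} \le E_{j,k^{r}}(N)$.

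Next I would apply the hypothesis to $N$ with parameter $k^{r}$, which gives $E_{j,k^{r}}(N) \le Ck^{r}\Upsilon(N)$. Since $\Upsilon$ is multiplicative, the $n_{i}$ are pairwise coprime, and $\Upsilon$ depends only on $\nu$ (so $\Upsilon(n_{i})=\Upsilon(n)$), we obtain $\Upsilon(N)=\prod_{i=1}^{r}\Upsilon(n_{i})=\Upsilon(n)^{r}$. Combining these estimates yields
$$
E_{j,k}(n)^{r} \le Ck^{r}\Upsilon(n)^{r},
$$
and taking $r$-th roots gives $E_{j,k}(n) \le C^{1/r}k\Upsilon(n)$. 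As $E_{j,k}(n)$ is finite (being at most $\kappa_{j}(n)$) and independent of $r$, while $C \ge 1$ is a fixed constant, letting $r \to \infty$ sends $C^{1/r} \to 1$ and produces the claimed bound $E_{j,k}(n) \le k\Upsilon(n)$.

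I do not expect a serious obstacle here: once the power trick is identified, the argument is essentially routine. The only points requiring care are the availability of pairwise coprime representatives with the prescribed exponent vector, guaranteed by the infinitude of primes, and the clean interaction of the three hypotheses, which is precisely what forces the constant to appear as the $r$-th power of a single $C$, so that one extraction of roots suffices to drive it to $1$.
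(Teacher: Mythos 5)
Your proof is correct and follows essentially the same tensor-power argument as the paper: both take pairwise coprime representatives with the same exponent vector (via Lemma \ref{lem:1}), iterate Lemma \ref{lem:2} to get $E_{j,k}(n)^{r} \le E_{j,k^{r}}(N) \le Ck^{r}\Upsilon(n)^{r}$, and let $r \to \infty$ after extracting $r$-th roots. The only cosmetic difference is that the paper includes $n$ itself among the coprime factors while you use $r$ fresh representatives, which is immaterial since $E_{j,k}$ depends only on $\nu(n)$.
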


\begin{proof}
By Lemma \ref{lem:1}, we can find $N-1$ pairwise coprime integers $n_{i}$ that are also coprime with $n$, with the same vector $\nu$ such that
$$
E_{j,k}(n)=E_{j,k}(n_{1})=\dots=E_{j,k}(n_{N-1}).
$$
Applying Lemma \ref{lem:2}, we find
\begin{eqnarray*}
E^{N}_{j,k}(n) & = & E_{j,k}(n)\prod_{i=1}^{N-1}E_{j,k}(n_{i})\\
& \le & E_{j,k^{N}}(nn_{1}\cdots n_{N-1})\\
& \le & C k^{N}\Upsilon(nn_{1}\cdots n_{N-1})\\
& = & C k^{N}\Upsilon^{N}(n).
\end{eqnarray*}
Thus, $E_{j,k}(n) \le C^{1/N} k\Upsilon(n)$ and, as $N$ approaches infinity, we obtain the desired result.
\end{proof}

\begin{rem}\label{rem_*}
Lemma \ref{lem:1}, \ref{lem:2} and \ref{lem:3} still hold for $E^{*}_{j,k}(n)$.
\end{rem}

\begin{lem}\label{lem:4}
For each real $\alpha \in [0,1)$ we consider the functions
$$
f_{\alpha}(x):=-(1-\alpha)\frac{x}{x+1}\log\frac{1}{1-\alpha}+\frac{\alpha x+1}{x+1}\log(\alpha x+1)\quad(x\ge 0)
$$
and
$$
\ell_{\alpha}(x):=\frac{(1+\alpha)x}{x+1}\log\frac{\alpha x+1}{1-\alpha}-\log\frac{\alpha (x-1)+1}{1-\alpha}\quad(x \ge 1).
$$
We have
\begin{enumerate}
\item[$(i)$] $f_{\alpha}(0)=0$;

\item[$(ii)$] The function $\frac{f_{\alpha}(x)}{\log(x+1)}$ is strictly increasing for $x \ge 0$ for each $\alpha \in (0,1)$;

\item[$(iii)$] $\ell_{\alpha}(x) \ge f_{\alpha}(x)$ for each $x \ge 1$.
\end{enumerate}
\end{lem}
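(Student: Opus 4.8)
The plan is to treat the three parts separately: part (i) is an immediate evaluation, part (ii) follows from a monotone form of L'Hôpital's rule, and part (iii) reduces to a polynomial identity once the logarithm is bounded below by a suitable Padé-type estimate. For (i), no work is needed: at $x=0$ the factor $\frac{x}{x+1}$ annihilates the first summand and $\log(\alpha\cdot 0+1)=0$ annihilates the second, so $f_\alpha(0)=0$.

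For (ii), I would set $h(x):=\log(x+1)$, noting that $f_\alpha(0)=h(0)=0$ and $h'(x)=\frac{1}{x+1}>0$ for $x>0$. By the monotone form of L'Hôpital's rule it is enough to prove that the ratio of derivatives $R(x):=f_\alpha'(x)/h'(x)=(x+1)f_\alpha'(x)$ is strictly increasing. Differentiating gives
$$f_\alpha'(x)=\frac{1-\alpha}{(x+1)^2}\log\frac{1-\alpha}{\alpha x+1}+\frac{\alpha}{x+1},\qquad R(x)=\alpha+\frac{1-\alpha}{x+1}\log\frac{1-\alpha}{\alpha x+1}.$$
A second differentiation shows that $R'(x)$ has the same sign as $\Phi(x):=\log\frac{\alpha x+1}{1-\alpha}-\frac{\alpha(x+1)}{\alpha x+1}$. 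I would then observe that $\Phi(0)=-\log(1-\alpha)-\alpha>0$ and that $\Phi'(x)=\frac{\alpha^2(x+1)}{(\alpha x+1)^2}>0$, so $\Phi>0$ throughout; hence $R'>0$ and the claimed strict monotonicity follows.

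For (iii), put $D(x):=\ell_\alpha(x)-f_\alpha(x)$. A direct computation gives $\ell_\alpha(1)=f_\alpha(1)=\frac{1+\alpha}{2}\log(1+\alpha)+\frac{1-\alpha}{2}\log(1-\alpha)$, so $D(1)=0$ and it suffices to show $D'(x)\ge 0$ for $x\ge 1$. Combining the two logarithmic contributions in $\ell_\alpha'$ and $f_\alpha'$ yields
$$D'(x)=\frac{2}{(x+1)^2}\log\frac{\alpha x+1}{1-\alpha}-\frac{\alpha(3\alpha x+2-\alpha)}{(x+1)(\alpha x+1)(\alpha(x-1)+1)},$$
in which the rational term is strictly negative, so the logarithm must carry the inequality. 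Writing $\frac{\alpha x+1}{1-\alpha}=1+\frac{\alpha(x+1)}{1-\alpha}$ and applying $\log(1+t)\ge\frac{2t}{2+t}$ with $t=\frac{\alpha(x+1)}{1-\alpha}$ gives the clean lower bound $\log\frac{\alpha x+1}{1-\alpha}\ge\frac{2\alpha(x+1)}{\alpha(x-1)+2}$. Substituting this into $D'(x)\ge 0$ and clearing the positive denominators reduces the claim to
$$4(\alpha x+1)(\alpha(x-1)+1)\ge(3\alpha x+2-\alpha)(\alpha(x-1)+2),$$
and expanding both sides makes the difference collapse to $\alpha^2(x^2-1)$, which is nonnegative for $x\ge 1$. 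Hence $D$ is nondecreasing on $[1,\infty)$ and $D(x)\ge D(1)=0$.

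The differentiations in (ii) and (iii) are mechanical, and the auxiliary facts used ($-\log(1-\alpha)>\alpha$ and $\log(1+t)\ge\frac{2t}{2+t}$) each follow from a one-line sign check of a derivative. The genuinely delicate point is in (iii): because the rational part of $D'$ is negative, the lower bound for the logarithm must be tight enough to overcome it, and a cruder Taylor estimate would not suffice. The virtue of the bound $\log(1+t)\ge\frac{2t}{2+t}$ is that it is exactly calibrated so the residual algebra telescopes to the transparently nonnegative $\alpha^2(x^2-1)$; establishing that this particular estimate is strong enough is where the main effort lies.
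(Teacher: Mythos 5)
Your proof is correct --- I verified the derivative computations ($f_\alpha'(x)=\frac{1-\alpha}{(x+1)^2}\log\frac{1-\alpha}{\alpha x+1}+\frac{\alpha}{x+1}$, the collapse of the rational part of $D'$ to $-\frac{\alpha(3\alpha x+2-\alpha)}{(x+1)(\alpha x+1)(\alpha(x-1)+1)}$, and the expansion of $4(\alpha x+1)(\alpha(x-1)+1)-(3\alpha x+2-\alpha)(\alpha(x-1)+2)=\alpha^2(x^2-1)$ all hold) --- but it takes a genuinely different route from the paper's. For $(ii)$, the paper differentiates directly, introducing the cleared-denominator auxiliary functions $b_1(\alpha,x):=(x+1)^2(\log(x+1))^2\frac{d}{dx}\frac{f_\alpha(x)}{\log(x+1)}$ and $b_2(\alpha,x):=(x+1)(\alpha x+1)\frac{d}{dx}b_1(\alpha,x)$, then checks signs of $b_2$ and its first three derivatives at $x=0$ and climbs back up the chain; your appeal to the monotone form of L'H\^opital's rule (justifiable in two lines via the Cauchy mean value theorem, since $f_\alpha(0)=\log(1+0)=0$) shortcuts this to showing $R(x)=(x+1)f_\alpha'(x)$ is increasing, which needs only the elementary sign analysis of $\Phi(x)=\log\frac{\alpha x+1}{1-\alpha}-\frac{\alpha(x+1)}{\alpha x+1}$ --- conceptually cleaner and with fewer nested differentiations. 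For $(iii)$, the paper again uses a derivative chain: it sets $q_1(\alpha,x):=(x+1)^2(\alpha x+1)(\alpha(x-1)+1)\frac{d}{dx}(\ell_\alpha-f_\alpha)$ and verifies positivity of $q_1$, $q_1'$, $q_1''$ at $x=1$ together with $q_1'''>0$; you instead insert the Pad\'e-type bound $\log(1+t)\ge\frac{2t}{2+t}$ into the single expression for $D'$ and reduce everything to the transparently nonnegative polynomial $\alpha^2(x^2-1)$. Your version is more self-contained and easier to audit (no iterated symbolic differentiation to trust), at the cost of having to find the right logarithm bound; the paper's scheme is more mechanical and generalizes routinely when no such clean rational minorant is available. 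One small caveat: in $(ii)$ you should note explicitly that $R'(x)=\frac{1-\alpha}{(x+1)^2}\Phi(x)$, so the sign claim uses $\alpha<1$, and that strict monotonicity of $f_\alpha/\log(x+1)$ on $(0,\infty)$ requires the strict form of the monotone L'H\^opital rule, which your strict inequality $\Phi>0$ does deliver.
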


\begin{proof}
$(i)$ is an easy verification. For the proof of $(ii)$, we proceed as follows.
\begin{enumerate}
\item[$\bullet$] We consider the function
$$
b_{1}(\alpha,x):=(x+1)^{2}(\log(x+1))^{2}\frac{d}{dx}\frac{f_{\alpha}(x)}{\log(x+1)}.
$$
We have to show that $b_{1}(\alpha,x)$ is strictly positive for each $x > 0$. We verify that $b_{1}(\alpha,0)=0$.

\item[$\bullet$] We then consider the function
$$
b_{2}(\alpha,x):=(x+1)(\alpha x+1)\frac{d}{dx}b_{1}(\alpha,x).
$$
We have to show that $b_{2}(\alpha,x)$ is strictly positive for each $x > 0$. We verify that $b_{2}(\alpha,0)=0$.

\item[$\bullet$] We verify that the function
$$
\frac{d}{dx}b_{2}(\alpha,x)\Bigl|_{x=0} = -(1-\alpha)(\alpha+\log(1-\alpha))
$$
is strictly positive for each $\alpha \in (0,1)$ by evaluating the rightmost factor at $\alpha=0$ and noting that its derivative is positive.

\item[$\bullet$] We verify that
$$
\frac{d^{2}}{dx^{2}}b_{2}(\alpha,x)\Bigl|_{x=0} = -2\alpha(1-\alpha)\log(1-\alpha)
$$
is strictly positive for each $\alpha \in (0,1)$.

\item[$\bullet$] We verify that
$$
\frac{d^{3}}{dx^{3}}b_{2}(\alpha,x) = \frac{2\alpha^{2}(1-\alpha)}{(x+1)(\alpha x+1)}
$$
is strictly positive for each $\alpha \in (0,1)$ and $x \ge 0$.
\end{enumerate}
By going back up the chain of steps, we deduce that $\frac{f_{\alpha}(x)}{\log(x+1)}$ is strictly increasing for $x \ge 0$ for each $\alpha \in (0,1)$. For the proof of $(iii)$, we write
$$
q(\alpha,x):=\ell_{\alpha}(x)-f_{\alpha}(x) \quad (x \ge 1)
$$
and we proceed as follows.
\begin{enumerate}
\item[$\bullet$] We first verify that $q(\alpha,1) \equiv 0$.

\item[$\bullet$] We then consider the function
$$
q_{1}(\alpha,x):=(x+1)^{2}(\alpha x+1)(\alpha(x-1)+1)\frac{d}{dx}q(\alpha,x).
$$
We have to show that $q_{1}(\alpha,x)$ is positive for $x \ge 1$ for each $\alpha \in [0,1)$. We verify that
$$
q_{1}(\alpha,1) = 2(\alpha+1)(\log(\alpha+1)-\log(1-\alpha)-2\alpha)
$$
is positive for each $\alpha \in [0,1)$ by evaluating the rightmost factor at $\alpha=0$ and noting that its derivative is positive.

\item[$\bullet$] We now turn to the first derivative
$$
q'_{1}(\alpha,x):=\frac{d}{dx}q_{1}(\alpha,x).
$$
We verify that
$$
q'_{1}(\alpha,1) = 2\alpha(\alpha+2)\Bigl(\log(\alpha+1)-\log(1-\alpha)-\frac{4\alpha}{\alpha+2}\Bigr)
$$
is positive for each $\alpha \in [0,1)$ as previously.

\item[$\bullet$] We now turn to the second derivative
$$
q''_{1}(\alpha,x):=\frac{d}{dx}q'_{1}(\alpha,x).
$$
We verify that
$$
q''_{1}(\alpha,1) = 4\alpha^{2}\Bigl(\log(\alpha+1)-\log(1-\alpha)-\frac{\alpha}{2(\alpha+1)}\Bigr)
$$
is positive $\alpha \in [0,1)$.

\item[$\bullet$] We now turn to the third derivative
\begin{eqnarray*}
q'''_{1}(\alpha,x) & := & \frac{d}{dx}q''_{1}(\alpha,x)\\
& = & \frac{2\alpha^{3}(2\alpha x+\alpha+2)}{(\alpha x+1)^{2}}
\end{eqnarray*}
which is clearly positive for each $\alpha \in [0,1)$ and $x \ge 1$.
\end{enumerate}
By going back up the chain of steps, we deduce that $\ell_{\alpha}(x) \ge f_{\alpha}(x)$ for $x \ge 1$ for each $\alpha \in [0,1)$. This completes the proof.
\end{proof}

\begin{lem}\label{lem:5}
For every integer $s \ge 0$ and real $1 \le \beta \le 2$,
$$
\sum_{i=0}^{s}(2i+1)^{\beta-1} \le (s+1)^{\beta}.
$$
\end{lem}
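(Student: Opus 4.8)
The plan is to reduce the statement to a single application of Jensen's inequality. First I would substitute $\gamma := \beta-1$, so that the hypothesis $1 \le \beta \le 2$ becomes $\gamma \in [0,1]$ and the claimed inequality reads
$$
\sum_{i=0}^{s}(2i+1)^{\gamma} \le (s+1)^{1+\gamma}.
$$
Dividing both sides by $s+1$, the left-hand side becomes the arithmetic mean of the $s+1$ values $(2i+1)^{\gamma}$, $i=0,\dots,s$, and the target inequality becomes
$$
\frac{1}{s+1}\sum_{i=0}^{s}(2i+1)^{\gamma} \le (s+1)^{\gamma}.
$$

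The key observation I would use is the elementary identity $\sum_{i=0}^{s}(2i+1)=(s+1)^{2}$ (the sum of the first $s+1$ odd integers). Consequently the arithmetic mean of the base points $2i+1$ is exactly $\frac{(s+1)^{2}}{s+1}=s+1$. Since $x \mapsto x^{\gamma}$ is concave on $(0,\infty)$ for $\gamma \in [0,1]$, Jensen's inequality with the uniform weights $\frac{1}{s+1}$ gives
$$
\frac{1}{s+1}\sum_{i=0}^{s}(2i+1)^{\gamma} \le \Bigl(\frac{1}{s+1}\sum_{i=0}^{s}(2i+1)\Bigr)^{\gamma}=(s+1)^{\gamma},
$$
which is precisely the reduced inequality; multiplying back by $s+1$ recovers the statement.

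I do not expect a genuine obstacle here: the entire argument is one invocation of concavity. The only point worth flagging is that the bound is tight — equality holds at the endpoints $\gamma=0$ and $\gamma=1$ (equivalently $\beta=1$ and $\beta=2$), where $x^{\gamma}$ is respectively constant and affine, so that Jensen becomes an identity. This tightness is exactly what pins down the normalization: it is the identity $\sum_{i=0}^{s}(2i+1)=(s+1)^{2}$ that makes the mean of the bases land on $s+1$, so that Jensen delivers the clean right-hand side $(s+1)^{\gamma}$ rather than something weaker. An alternative route would be induction on $s$, which reduces to showing $(2s+3)^{\gamma}\le (s+2)^{1+\gamma}-(s+1)^{1+\gamma}$; however, verifying this two-term inequality uniformly in $\beta \in [1,2]$ is fiddly, since the relevant function of $\beta$ fails to be concave for all $s$, so I would prefer the Jensen argument.
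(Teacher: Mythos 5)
Your proof is correct, and it takes a genuinely different route from the paper. The paper proceeds by induction on $s$: the inductive step reduces to the two-term inequality $(2s+1)^{\beta-1} \le (s+1)^{\beta}-s^{\beta}$, which the paper verifies by introducing $\psi(x):=\frac{(x+1)^{\beta}-x^{\beta}}{(2x+1)^{\beta-1}}-1$ and running a calculus argument — checking $\psi(0)=0$, $\psi'(0)=2-\beta\ge 0$, $\psi'(\infty)=0$, and $\psi''(x)\le 0$, so that $\psi'$ decreases to $0$ and is hence nonnegative, whence $\psi\ge 0$. This is exactly the ``fiddly'' alternative you flagged at the end, except that the paper tames it by differentiating in $x$ rather than in $\beta$, so your concern about non-concavity in $\beta$ does not arise there. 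Your argument replaces all of this with a single invocation of Jensen's inequality for the concave map $x\mapsto x^{\gamma}$, $\gamma=\beta-1\in[0,1]$, anchored on the identity $\sum_{i=0}^{s}(2i+1)=(s+1)^{2}$; both the direction of Jensen (arithmetic mean of concave values below the concave value of the mean) and the concavity hypothesis are correctly used, and the endpoint cases $\gamma\in\{0,1\}$ are handled since $x^{\gamma}$ is then affine. What your approach buys is brevity and transparency — no derivatives, no induction, and a structural explanation of why the bound is tight at $\beta=1$ and $\beta=2$ — while the paper's monotonicity analysis is self-contained in the elementary-calculus style it uses throughout (compare Lemma~4) and yields the sharper pointwise fact $(2s+1)^{\beta-1}\le (s+1)^{\beta}-s^{\beta}$, which is stronger than the summed statement. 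Either proof suffices for the paper's application in Corollary~1.
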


\begin{proof}
We proceed by induction. The case $s = 0$ is straightforward. For the inductive step, we consider the function
$$
\psi(x):=\frac{(x+1)^{\beta}-x^{\beta}}{(2x+1)^{\beta-1}}-1.
$$
It is enough to establish that $\psi(s) \ge 0$ for $s \ge 1$. We have $\psi(0) = 0$ and
$$
\psi'(x) = \beta\frac{(x+1)^{\beta-1}-x^{\beta-1}}{(2x+1)^{\beta-1}}-2(\beta-1)\frac{(x+1)^{\beta}-x^{\beta}}{(2x+1)^{\beta}}
$$
so that $\psi'(0) = 2-\beta \ge 0$ and $\psi'(\infty)=0$. Differentiating a second time gives
$$
\psi''(x) =-\beta(\beta-1) \frac{x^{\beta-2}-(x+1)^{\beta-2}}{(2x+1)^{\beta+1}}
$$
which clearly satisfies $\psi''(x) \le 0$ for $1 \le \beta \le 2$ and $x > 0$. We deduce that $\psi'(x)$ is positive for all $x \ge 0$ and then that $\psi(x)$ is positive for all $x \ge 0$. This completes the proof.
\end{proof}

The Erd\H os-Hooley Delta function (see \cite{ch}) is defined by
$$
\Delta(n):=\max_{u \in \mathbb{R}}|\{d \mid n:\ e^{u} < d \le e^{u+1}\}|.
$$
\begin{lem}\label{lem:6}
For every integer $n \ge 2$,
$$
\Delta(n) \ll \frac{\tau(n)}{\sqrt{\Omega_{2}(n)}}.
$$
\end{lem}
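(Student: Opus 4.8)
The plan is to pass to a probabilistic formulation and reduce the lemma to an anti-concentration estimate. Writing $n=\prod_{p^{v}\|n}p^{v}$ and choosing a divisor $d$ uniformly at random, we have $\log d=\sum_{p^{v}\|n}X_{p}$ where the variables $X_{p}:=a_{p}\log p$ are independent and $a_{p}$ is uniform on $\{0,\dots,v\}$. Since $\Delta(n)=\tau(n)\,\sup_{u}\mathbb{P}(\log d\in(u,u+1])$, the statement to prove is exactly the anti-concentration bound $\sup_{u}\mathbb{P}(\log d\in(u,u+1])\ll\Omega_{2}(n)^{-1/2}$.

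First I would record where the factor $\sqrt{\Omega_{2}(n)}$ comes from. Each $X_{p}$ takes $v+1$ values in arithmetic progression with common difference $\log p\ge\log 2$, so at most $\lceil 1/\log 2\rceil=2$ of them lie in any interval of length $1$; hence the one-prime concentration satisfies $\sup_{u}\mathbb{P}(X_{p}\in(u,u+1])\le 2/(v+1)$. Moreover $\mathrm{Var}(\log d)=\sum_{p^{v}\|n}\tfrac{v(v+2)}{12}(\log p)^{2}\ge\tfrac{(\log 2)^{2}}{12}\,\Omega_{2}(n)$, again because every $\log p\ge\log 2$. Thus on heuristic grounds $\sup_{u}\mathbb{P}(\log d\in(u,u+1])$ should be of size $1/\mathrm{sd}(\log d)\ll\Omega_{2}(n)^{-1/2}$, and the whole difficulty is to turn this heuristic into a rigorous uniform bound.

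To make it rigorous I see two natural routes. One is a second-moment (collision) argument: gridding $\log d$ into unit cells with masses $p_{k}$, one has $\Delta(n)\le 2\tau(n)\max_{k}p_{k}$, while $\sum_{k}p_{k}^{2}=\mathbb{P}(\lfloor\log d\rfloor=\lfloor\log d'\rfloor)$ for an independent copy $d'$; the task then splits into a flatness estimate $\max_{k}p_{k}\ll\sum_{k}p_{k}^{2}$ (no single cell should dominate the $\ell^{2}$-mass of a sum of many spread-out variables) and the collision estimate $\mathbb{P}(|\log d-\log d'|<1)\ll\Omega_{2}(n)^{-1/2}$, the latter being an anti-concentration bound of the same shape for the symmetric sum $\log d-\log d'=\sum_{p}(a_{p}-a_{p}')\log p$. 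The other route is to compare $\log d$ with a continuous log-concave model: replacing each discrete $X_{p}$ by a continuous uniform variable of the same length $(v+1)\log p$ yields a sum of independent uniforms, which is log-concave, and for a log-concave density $f$ one has the clean bound $\sup f\ll 1/\mathrm{sd}\ll\Omega_{2}(n)^{-1/2}$.

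The hard part, in either route, is that the estimate must see the full $\Omega_{2}(n)=\sum v^{2}$ rather than merely $\omega(n)$. The plain Kolmogorov--Rogozin inequality at unit scale only yields $\ll\omega(n)^{-1/2}$, because it credits each prime a bounded amount irrespective of its multiplicity; the surplus $\sum v^{2}$ is a lattice effect produced by the $v+1$ equally spaced exponents of a single prime, and is invisible to that inequality. I expect the main obstacle to be exactly this combination: capturing the lattice contribution from high-multiplicity primes (which already forces $\Delta(p^{v})\ll 1$, as for $n=2^{v}$) together with the central-limit spreading produced by many distinct primes (the squarefree regime, where $\Omega_{2}=\omega$), within a single unit-scale bound and uniformly in the sizes of the primes. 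A plausible way to organise this is to separate the unique dense prime $p=2$ (the only one with $\log p<1$) and to treat the widely spaced odd part by combining the per-prime bound $\le 2/(v+1)$ with an inequality sensitive to the individual spreads $v\log p$ rather than to the mere number of primes; reconciling the discrete arithmetic-progression structure with the continuous log-concave comparison, precisely when the spacings $\log p$ are large (so that divisors are sparse, which only helps but obstructs a naïve smoothing of $X_{p}$), is where I anticipate the real work.
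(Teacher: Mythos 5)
Your reduction $\Delta(n)=\tau(n)\sup_{u}\mathbb{P}(\log d\in(u,u+1])$ is correct, and you have accurately diagnosed where the difficulty lies, but the proposal stops exactly there: the unit-scale anti-concentration bound $\sup_{u}\mathbb{P}(\log d\in(u,u+1])\ll\Omega_{2}(n)^{-1/2}$ is never established, and neither of your two routes closes it. In the collision route, the flatness step $\max_{k}p_{k}\ll\sum_{k}p_{k}^{2}$ is false for general distributions (one cell of mass $\epsilon$ together with many cells of mass $\ll\epsilon^{2}$ gives $\max_{k}p_{k}\gg\sum_{k}p_{k}^{2}$), so it would have to be extracted from the specific structure of $\log d$ --- which is again a statement of the same anti-concentration type; moreover the collision estimate $\mathbb{P}(|\log d-\log d'|<1)\ll\Omega_{2}(n)^{-1/2}$ is just the symmetrized version of the bound you are trying to prove, so this route is essentially circular. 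In the log-concave route, the comparison with continuous uniforms fails at unit scale in precisely the regime you flag: for $n=p^{v}$ with $p$ large the continuous model gives $\ll 1/(v\log p)$ while the true concentration is $1/(v+1)$, so the discrete lattice mass exceeds the log-concave majorant by an unbounded factor. Since you yourself locate ``the real work'' at this reconciliation and do not carry it out, what you have is a research plan, not a proof.

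For contrast, the paper's proof sidesteps anti-concentration entirely by converting the metric constraint into an order-theoretic one. Fix $X$ and set $R_{n}(X)=\{d\mid n:\ X<d\le 2X\}$; two distinct elements of $R_{n}(X)$ cannot divide one another, so $R_{n}(X)$ is an antichain in the divisor lattice of $n$. By the theorem of de Bruijn, van Ebbenhorst Tengbergen and Kruyswijk, the largest antichain in this lattice is the middle layer $\{d\mid n:\ \Omega(d)=\lfloor\Omega(n)/2\rfloor\}$, and a theorem of Anderson (the ``variance method'') bounds that layer by $\ll\tau(n)/\sqrt{\Omega_{2}(n)}$; finally $(e^{u},e^{u+1}]$ is covered by two dyadic blocks since $e<4$. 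The antichain formulation is exactly what makes the quantity $\sqrt{\Omega_{2}(n)}$ appear --- it records the lattice effect of high multiplicities that, as you correctly observe, Kolmogorov--Rogozin-type inequalities at unit scale cannot see. If you want to salvage your approach, the honest statement is that the needed anti-concentration input is equivalent in strength to the extremal results the paper imports, so some substitute for the de Bruijn--Tengbergen--Kruyswijk and Anderson theorems would have to be proved from scratch.
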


\begin{proof}
Define the set $R_{n}(X)=\{d \mid n:\ X < d \le 2X\}$ and let $X$ be a fixed real number. Any two distinct elements $d_{1}$ and $d_{2}$ of $R_{n}(X)$ cannot divide one another, that is $d_{1} \nmid d_{2}$. It follows from Theorem 1 of \cite{ngdb:cvet:dk} that $|R_{n}(X)|$ is at most equal to the number of divisors $d$ of $n$ that satisfy $\Omega(d)=\lfloor\frac{\Omega(n)}{2}\rfloor$. From Theorem 2 of \cite{ia}, we deduce that $|R_{n}(X)| \ll \frac{\tau(n)}{\sqrt{\Omega_{2}(n)}}$. The result then follows from the inequality
$$
\sum_{\substack{d \mid n \\ e^{u} < d \le e^{u+1}}} 1 \le \sum_{\substack{d \mid n \\ e^{u} < d \le 2e^{u}}} 1+\sum_{\substack{d \mid n \\ 2e^{u} < d \le 4e^{u}}} 1.
$$
\end{proof}

\begin{rem}
It is interesting to note that the previous lemma implies that the number of solutions to the equation
$$
n = x^{k}+y^{k} \qquad ((x,y) \in \mathbb{Z}^{2}_{\ge 0})
$$
is at most $\ll \frac{k\tau(n)}{\sqrt{\Omega_{2}(n)}}$ for each odd integer $k \ge 3$ when $n \ge 2$. It is well-known to be false for $k=2$.
\end{rem}

\begin{lem}\label{lem:7}
Let $(x_{i},y_{i})$ for $i=1,\dots,k$ be pairs of positive real numbers that satisfy $\sum_{i=1}^{k}x_{i}\le\sum_{i=1}^{k}y_{i}$. There exists a permutation $\sigma=(\sigma_{1},\dots,\sigma_{k})$ such that for each $1 \le s \le k$,
$$
\sum_{i=1}^{s}x_{\sigma_{i}} \le \sum_{i=1}^{s}y_{\sigma_{i}}.
$$
\end{lem}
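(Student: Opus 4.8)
The plan is to reduce the statement to a prefix-sum condition on a single sequence of reals and then argue by induction on $k$. First I would set $z_{i} := y_{i} - x_{i}$ for $i = 1, \dots, k$. Because the $x_{i}$ and $y_{i}$ are positive, each $z_{i}$ is an \emph{arbitrary} real number, and the hypothesis $\sum_{i=1}^{k} x_{i} \le \sum_{i=1}^{k} y_{i}$ becomes simply $\sum_{i=1}^{k} z_{i} \ge 0$. Moreover, for any permutation $\sigma$ and any index $s$, the inequality $\sum_{i=1}^{s} x_{\sigma_{i}} \le \sum_{i=1}^{s} y_{\sigma_{i}}$ is equivalent to $\sum_{i=1}^{s} z_{\sigma_{i}} \ge 0$. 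Thus it suffices to prove the following: given reals $z_{1}, \dots, z_{k}$ with $\sum_{i=1}^{k} z_{i} \ge 0$, there is a permutation $\sigma$ such that every prefix sum $\sum_{i=1}^{s} z_{\sigma_{i}}$ is nonnegative.

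I would establish this reformulation by induction on $k$, the case $k = 1$ being immediate since then $z_{1} = \sum_{i} z_{i} \ge 0$. For the inductive step, the key idea is to decide which pair to place \emph{last} rather than first. Let $m$ be an index achieving $z_{m} = \min_{1 \le i \le k} z_{i}$. Then $z_{m} \le \frac{1}{k}\sum_{i=1}^{k} z_{i}$, and since $\sum_{i=1}^{k} z_{i} \ge 0$ and $k \ge 1$ this yields $z_{m} \le \sum_{i=1}^{k} z_{i}$, whence the remaining $k-1$ reals satisfy $\sum_{i \ne m} z_{i} = \sum_{i=1}^{k} z_{i} - z_{m} \ge 0$.

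By the induction hypothesis applied to the family $(z_{i})_{i \ne m}$, there is an ordering of these $k-1$ reals all of whose prefix sums are nonnegative; placing $z_{m}$ in the final position then produces a permutation $\sigma$ of $\{1, \dots, k\}$. For $s \le k-1$ the prefix sum $\sum_{i=1}^{s} z_{\sigma_{i}}$ is nonnegative by construction, and for $s = k$ it equals $\sum_{i=1}^{k} z_{i} \ge 0$ by hypothesis. Undoing the substitution recovers the permutation required for the original pairs.

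The argument is elementary, so I do not anticipate a serious obstacle; the single point needing care is the choice of which element to remove, and it is exactly here that a naive ``largest element first'' greedy strategy breaks down, since removing a large $z_{i}$ can make the reduced total sum negative and stall the induction. Removing the \emph{minimum} is what guarantees the reduced family still has nonnegative total sum, and placing it at the \emph{end} (rather than the start) is what makes the final prefix coincide with the full sum, which is nonnegative by hypothesis.
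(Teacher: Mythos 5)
Your proof is correct and follows essentially the same strategy as the paper: both build the permutation from the back, at each step placing last an element whose removal preserves the nonnegativity of the remaining total, and both justify the existence of such an element by an averaging argument (your explicit choice of the index minimizing $y_{i}-x_{i}$ is a concrete instance of the subset $I_{0}$ the paper obtains by summing over all subsets of size $t-1$). The substitution $z_{i}=y_{i}-x_{i}$ and the explicit minimum make your write-up slightly more direct, but the underlying mechanism is identical.
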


\begin{proof}
We construct $\sigma$ with an inductive process. Assume that the $i$-th element of $\sigma$ is determined for $i=t+1,\dots,k$ and that we have established the inequality
\begin{equation}\label{eq:7.1}
\sum_{i=1}^{t}x_{\sigma_{i}} \le \sum_{i=1}^{t}y_{\sigma_{i}}
\end{equation}
where, even if the order is unknown, the sums are well defined. If $t=1$, we define $\sigma_{t}$ to be the last index, thus concluding the inductive process. Otherwise, we assume for a contradiction that for each subset $I$ of $t-1$ of the $t$ remaining subscripts we have
$$
\sum_{i \in I}x_{i} > \sum_{i \in I}y_{i}.
$$
By summing these inequalities over all possible $\binom{t}{t-1}=t$ such sets $I$, we get a contradiction with \eqref{eq:7.1} from the fact that each subscript is in exactly $t-1$ of these sets. This contradiction implies that there exists a subset $I_{0}$ of size $t-1$ such that
$$
\sum_{i \in I_{0}}x_{i} \le \sum_{i \in I_{0}}y_{i}.
$$
Thus, we define $\sigma_{t}$ as the remaining index, the one that is not included in $I_{0}$. This process eventually terminates, and thus the proof is complete. 
\end{proof}

\begin{lem}\label{lem:8}
Let $n,q \ge 2$ be fixed coprime integers, and assume that $n^{\frac{1}{4}+\mu} \le q \le n$ for some $\mu > 0$. For each congruence class $t$ modulo $q$, we have
$$
|\{d \in \mathcal{D}_{n}:\ d \equiv t \pmod{q}\}| \ll \frac{1}{\mu^{3/2}}.
$$
\end{lem}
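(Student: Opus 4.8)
The plan is to reduce the problem to counting the divisors of $n$ below $\sqrt{n}$ that lie in a fixed residue class, and then to exploit the interaction between the additive spacing forced by $q$ and the multiplicative constraint of being a divisor. First note that if the class $t$ contains any divisor $d$ then $\gcd(t,q)=\gcd(d,q)=1$, since $\gcd(n,q)=1$; so $t$ is invertible modulo $q$ and the involution $d \mapsto n/d$ is a bijection of $\{d\mid n:\ d\equiv t\}$ onto $\{d\mid n:\ d\equiv nt^{-1}\}$ that interchanges the divisors below and above $\sqrt{n}$. Hence it suffices to bound, uniformly in the class $t'$, the quantity $r:=|\{a\mid n:\ a\le\sqrt{n},\ a\equiv t'\ \pmod q\}|$, after which $|\{d\mid n:\ d\equiv t\}|\le 2\max_{t'}r$. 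This already settles the range $q>\sqrt{n}$ cleanly: divisors $a\le\sqrt{n}<q$ lie in $[1,q)$ and so are pairwise distinct modulo $q$, whence $r\le 1$ and the total is at most $2$. The work is therefore confined to $n^{1/4+\mu}\le q\le\sqrt{n}$.

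Next I would extract the key multiplicative–additive structure. Given two such small divisors $a<a'$ in the class $t'$, set $g=\gcd(a,a')$ and write $a=g\alpha$, $a'=g\beta$ with $\gcd(\alpha,\beta)=1$. Since $g\mid n$ and $\gcd(n,q)=1$ we have $\gcd(g,q)=1$, so $q\mid (a'-a)=g(\beta-\alpha)$ forces $q\mid \beta-\alpha$; as $\beta>\alpha$ this yields $\beta\ge\alpha+q\ge q$. Moreover $\alpha\beta\mid n$, hence $\alpha\beta\le n$ and $\alpha\le n/\beta\le n/q$. Thus every ratio $a'/a=\beta/\alpha$ arises from a coprime pair dividing $n$ whose larger member exceeds $q$ while their product stays below $n$: a genuine incompatibility between the additive gap $q\ge n^{1/4+\mu}$ and the multiplicative condition $\alpha\beta\mid n$. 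I emphasize that the purely additive spacing argument—at most one divisor in each block $[jq,(j+1)q)$—only produces the polynomial bound $r\ll \sqrt{n}/q\ll n^{1/4-\mu}$, so the divisor structure must be used decisively.

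The heart of the argument, and the step I expect to be the main obstacle, is to turn this incompatibility into a bound on $r$ that is \emph{independent of $n$}. My plan would be to leverage that all pairwise products $a_ia_j$ of divisors in the class lie in the single residue class $t'^2\pmod q$ and are confined to $[q^2,n]$, a window of multiplicative width only $n/q^2\le n^{1/2-2\mu}$; grouping the $a_i$ by dyadic scale and balancing the number of admissible products in a fixed class against the multiplicative coincidences $a_ia_j=a_k a_\ell$ that $r$ divisors must generate (a quantity controlled by the multiplicative energy of $\mathcal{D}_n$) should force $r$ to be bounded once the scale parameter is optimized against $q$. It is precisely this optimization—weighing a $1/\sqrt{\cdot}$-type density of products constrained to a residue class against the $\mu$-dependent multiplicative width $n^{1/4-\mu}$—that I expect to produce the exponent $3/2$ and the final bound $r\ll\mu^{-3/2}$. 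The delicate point throughout is that $q$ lies only a fourth power below $n$, so neither the additive spacing nor the multiplicative structure is individually sufficient; as elsewhere in the paper, it is their interaction that must be exploited, and making the product/energy balance yield a clean $n$-free constant is where the real difficulty resides.
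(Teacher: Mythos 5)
There is a genuine gap, and it sits exactly where you predicted it would. Your preliminary reductions are fine: the involution $d \mapsto n/d$ (which maps the class $t$ to the class $nt^{-1}$, both invertible modulo $q$ since $\gcd(n,q)=1$) legitimately confines the problem to divisors $a \le \sqrt{n}$, the case $q > \sqrt{n}$ gives $r \le 1$ by spacing, and the observation that two divisors $a=g\alpha < a'=g\beta$ in the same class force a coprime pair $\alpha,\beta$ with $\alpha\beta \mid n$ and $\beta \ge q$ is the standard entry point (it is also how Lenstra begins). But from that point on you have only a plan, not a proof: the claim that grouping by dyadic scale and ``balancing the number of admissible products in a fixed class against the multiplicative coincidences $a_ia_j = a_ka_\ell$'' yields an $n$-free bound $r \ll \mu^{-3/2}$ is never substantiated, and the proposed tool is too weak to substantiate it. The multiplicative energy of $\mathcal{D}_n$ is superquadratic in $\tau(n)$ in general (already for $n$ squarefree with $\omega(n)=k$ it is $5^{k}$ against $\tau(n)^2 = 4^{k}$; this is the content of Theorem 1.4 of the paper's reference on multiplicative energy), so it grows without bound with $n$, and any inequality routed through it will retain $n$-dependence unless far more specific structure is exploited. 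Nothing in your sketch explains how the residue-class constraint on the products $a_ia_j \equiv t'^2 \pmod{q}$ interacts with divisor structure to cancel that growth, nor where the exponent $3/2$ would actually come from; you acknowledge this is ``where the real difficulty resides,'' and that difficulty is not crossed.

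For comparison: the paper does not prove this lemma from scratch at all. It cites Corollary 3.1 of Coppersmith, Howgrave-Graham and Nagaraj (\emph{Divisors in residue classes, constructively}), with Lenstra's \emph{Divisors in residue classes} and Cohen's note as background. The uniform bound $O(\mu^{-3/2})$ for $q \ge n^{1/4+\mu}$ is a genuinely deep result whose known proofs go through lattice basis reduction and Coppersmith-type polynomial constructions (Lenstra's earlier elementary argument gives a weaker $\mu$-dependence); no elementary energy-counting argument of the kind you outline is known to recover it. Had you invoked the CHN result, your reductions would have been unnecessary --- the lemma \emph{is} their corollary --- so the honest assessment is that your proposal reproves the easy perimeter of the statement and leaves its core to a heuristic that, as stated, does not work.
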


\begin{proof}
The result follows from Corollary 3.1 of \cite{dc:nhg:svn}, see also \cite{hwlj} and \cite{hc}.
\end{proof}

\section{Proof of the theorems}
\subsection{Proof of Theorem \ref{thm:1}}

Following the proof of Théorème 2a of \cite{rdlb}, we introduce an additive function $h_{\alpha,j,n}(p^{i}):=u_{\alpha,j}(v)$, where $p^{v} \| n$, to be specified later. The average of $h_{\alpha,j,n}(d_{s})$ $(s=1,\dots,j)$ is given by
\begin{eqnarray*}
A_{\alpha,j}(n) & := & \frac{1}{\kappa_{j}(n)}\sum_{\substack{d_i \mid n \\ i=1,\dots,j \\ \gcd(d_{i_1},d_{i_2})= 1\\ (i_1 \neq i_2)}}h_{\alpha,j,n}(d_{s})\\
& = & \sum_{p^{v} \| n}\frac{vu_{\alpha,j}(v)}{jv+1}.
\end{eqnarray*}
For fixed weights $\lambda_{s} \ge 0$ such that $\lambda_{1}+\cdots+\lambda_{j}=1$, we want to compare the linear form $\lambda_{1}h_{\alpha,j,n}(d_{1})+\cdots+\lambda_{j}h_{\alpha,j,n}(d_{j})$ to its average $A_{\alpha,j}(n)$. We will choose $h_{\alpha,j,n}$ to optimize the concentration of its values around $A_{\alpha,j}(n)$.

We introduce the function
{\footnotesize$$
S^{-}_{j}(n,\alpha)  :=  |\{(d_{1},\dots,d_{j}) \in \mathcal{D}_{n}^{j}:\ \gcd(d_{i_1},d_{i_2})= 1\ (i_1 \neq i_2),\ \frac{1}{j}h_{\alpha,j,n}(d_{1}\cdots d_{j}) \le (1-\alpha)A_{\alpha,j}(n)\}|
$$}
\hskip -4pt for $\alpha \in [0,1)$.

We can write
\begin{eqnarray*}
S^{-}_{j}(n,\alpha) & \le & \sum_{\substack{d_i \mid n \\ i=1,\dots,j \\ \gcd(d_{i_1},d_{i_2})= 1\\ (i_1 \neq i_2)}}\exp\Bigl((1-\alpha)A_{\alpha,j}(n)-\frac{1}{j}h_{\alpha,j,n}(d_{1}\cdots d_{j})\Bigr)\\
& = & \kappa_{j}(n)\exp\Bigl(\sum_{p^{v} \| n}(1-\alpha)\frac{vu_{\alpha,j}(v)}{jv+1}+\log\frac{1+jv\exp(-u_{\alpha,j}(v)/j)}{jv+1}\Bigr).
\end{eqnarray*}
We find that the optimal choice is
\begin{equation}
u_{\alpha,j}(v) := j\log\frac{\alpha jv+1}{1-\alpha},
\end{equation}
which yields the inequality
\begin{equation}\label{s-}
S^{-}_{j}(n,\alpha) \le \kappa_{j}(n)\exp\Bigl(-\sum_{p^{v} \| n}f_{\alpha}(jv)\Bigr),
\end{equation}
where
$$
f_{\alpha}(x):=-(1-\alpha)\frac{x}{x+1}\log\frac{1}{1-\alpha}+\frac{\alpha x+1}{x+1}\log(\alpha x+1).
$$

Next, we consider
\begin{eqnarray*}
S^{+}_{j}(n,\alpha,\beta,r) & := & |\{(d_{1},\dots,d_{j}) \in \mathcal{D}_{n}^{j}:\ \gcd(d_{i_1},d_{i_2})= 1\ (i_1 \neq i_2),\\
&& \quad\frac{h_{\alpha,j,n}(d_{1})}{1+(j-1)r}+\frac{rh_{\alpha,j,n}(d_{2}\cdots d_{j})}{1+(j-1)r} \ge (1+\beta)A_{\alpha,j}(n)\}|.
\end{eqnarray*}
With $h_{\alpha,j,n}$ fixed, we have
{\scriptsize\begin{eqnarray}\nonumber
S^{+}_{j}(n,\alpha,\beta,r) & \le & \sum_{\substack{d_i \mid n \\ i=1,\dots,j \\ \gcd(d_{i_1},d_{i_2})= 1\\ (i_1 \neq i_2)}}\exp\Bigl(\frac{h_{\alpha,j,n}(d_{1})}{1+(j-1)r}+\frac{rh_{\alpha,j,n}(d_{2}\cdots d_{j})}{1+(j-1)r}-(1+\beta)A_{\alpha,j}(n)\Bigr)\\ \label{eq:th1b}
& = & \kappa_j(n)\exp\Bigl(\sum_{p^v \| n}\log\frac{1+v\exp(\frac{u_{\alpha,j}(v)}{1+(j-1)r})+(j-1)v\exp(\frac{ru_{\alpha,j}(v)}{1+(j-1)r})}{jv+1}-(1+\beta)\frac{vu_{\alpha,j}(v)}{jv+1}\Bigr).
\end{eqnarray}}
\hskip -4pt This leads to the definition of the function
{\footnotesize$$
\xi(v,\alpha,\beta,j,r):=-\log\frac{1+v\exp(\frac{j\log\frac{\alpha jv+1}{1-\alpha}}{1+(j-1)r})+(j-1)v\exp(\frac{rj\log\frac{\alpha jv+1}{1-\alpha}}{1+(j-1)r})}{jv+1}+(1+\beta)\frac{jv\log\frac{\alpha jv+1}{1-\alpha}}{jv+1}
$$}
\hskip -3pt for which a lower estimate is required for $\frac{\xi(x,\alpha,\beta,j,r)}{\log(jx+1)}$ for $x \ge 1$ when $\alpha \in [0,1)$, $r \ge 0$ and $j \in \mathbb{N}$ are fixed.

\subsubsection*{Completion of the proof of Theorem \ref{thm:1}a}

Let us introduce the function $\ell_{\alpha}(x):=\xi(x/j,\alpha,\alpha,j,1)$, that is
$$
\ell_{\alpha}(x):=\frac{(1+\alpha)x}{x+1}\log\frac{\alpha x+1}{1-\alpha}-\log\frac{\alpha (x-1)+1}{1-\alpha}.
$$
By $(iii)$ of Lemma \ref{lem:4}, the function $q(\alpha,x)=\ell_{\alpha}(x)-f_{\alpha}(x)$ is positive for $x \ge 1$ and $\alpha \in [0,1)$. Therefore,
\begin{eqnarray}\nonumber
S^{+}_{j}(n,\alpha,\alpha,1) & \le & \kappa_{j}(n)\exp\Bigl(-\sum_{p^{v} \| n}\ell_{\alpha}(jv)\Bigr)\\ \label{s+}
& \le & \kappa_{j}(n)\exp\Bigl(-\sum_{p^{v} \| n}f_{\alpha}(jv)\Bigr).
\end{eqnarray}

To conclude the proof, we choose a $k$-regular function $g_{j,n}$ for which $F_{j,k}(n)=E_{j,k}(n)$. We set $\alpha=\frac{1}{2j+1}$. We observe that for each $j$-tuple counted by $F_{j,k}(n)$ we either have $h_{\alpha,j,n}(d_{1}\cdots d_{j})/j$ or one of the
$$
\frac{1}{j}h_{\alpha,j,n}\Bigl(\frac{d_{1}\cdots d_{j}g_{j,n}(d_{1},\dots,d_{j})}{d_{i}}\Bigr) \qquad (i \in \{1,\dots,j\})
$$
that is at most $(1-\alpha)A_{\alpha,j}(n)$ or else we have $h_{\alpha,j,n}(d_{1}\cdots d_{j-1}(d_{j}g_{j,n}(d_{1},\dots,d_{j})))/j$ that is at least $(1+\alpha)A_{\alpha,j}(n)$. Since $g_{j,n}$ is $k$-regular, we deduce from \eqref{s-}, \eqref{s+} and Lemma \ref{lem:4} $(ii)$ that
$$
E_{j,k}(n) \le ((j+1)k+1)\kappa_{j}(n)^{1-\delta_{j}}.
$$
The result follows from Lemma \ref{lem:3}.

\subsubsection*{Completion of the proof of Theorem \ref{thm:1}b}

For the case $j=2$ we will refine our argument. We set the parameters as follows:
$$
\alpha:=0.2288541994,\quad r:=0.692466598\quad \mbox{and}\quad \beta:=\frac{2+r}{1+r}(1-\alpha)-1.
$$
We then verify that $f_{\alpha}(2)/\log(3) \ge 0.045072 (= \delta)$ so that \eqref{s-} becomes
$$
S^{-}_{2}(n,\alpha) \le \kappa_{2}(n)^{1-\delta} 
$$
from Lemma \ref{lem:4} $(ii)$. Similarly, we will show that
\begin{equation}\label{todo}
\xi(v,\alpha,\beta,2,r)/\log(2v+1) \ge \delta
\end{equation}
for each $v \ge 1$ leading to
$$
S^{+}_{2}(n,\alpha,\beta,r) \le \kappa_{2}(n)^{1-\delta}
$$
from \eqref{eq:th1b}.

Assuming the inequality \eqref{todo}, the remainder of the argument is similar to the previous one. Precisely, we either have one of the terms
$$
\frac{h_{\alpha,2,n}(d_{1}d_{2})}{2},\ \frac{h_{\alpha,2,n}(d_{1}g_{2,n}(d_{1},d_{2}))}{2}\ \mbox{and}\ \frac{h_{\alpha,2,n}(d_{2}g_{2,n}(d_{1},d_{2}))}{2}
$$
being at most $(1-\alpha)A_{\alpha,2}(n)$ or else one of the terms
$$
\frac{h_{\alpha,2,n}(d_{1}g_{2,n}(d_{1},d_{2}))}{1+r}+\frac{rh_{\alpha,2,n}(d_{2})}{1+r}\ \mbox{or}\ \frac{h_{\alpha,2,n}(d_{2}g_{2,n}(d_{1},d_{2}))}{1+r}+\frac{rh_{\alpha,2,n}(d_{1})}{1+r}
$$
is at least $\frac{2+r}{1+r}(1-\alpha)A_{\alpha,2}(n)$. Since $g_{2,n}$ is $k$-regular we conclude
$$
E_{2,k}(n) \le (4k+1)\kappa_{2}(n)^{1-\delta}
$$
and the result follows from Lemma \ref{lem:3}.

It remains only to establish \eqref{todo}. Using a computer, we first verify that the inequality holds for each $v < 1000000$. For $v \ge 1000000$, we show that the numerator in the first logarithm is a most $0.11994463\cdot(2v+1)^{2.181707220906701759}$ (the first exponential dominate) and we establish that the term in the second logarithm is at least $0.29677163413447\cdot(2v+1)$. We then find the lower estimate
$$
0.0450722+\frac{0.63}{\log(2v+1)} \qquad (v \ge 1000000)
$$
and the result follows.

\subsection{Proof of Theorem \ref{thm:2}a}

For every choice of divisors $d_{1},\dots,d_{j}$ for which $g_{j,n}(d_{1},\dots,d_{j})$ is well defined, we implicitly define $e$ by
\begin{equation}\label{fac_n}
n =: d_{1}\cdots d_{j}g_{j,n}(d_{1},\dots,d_{j})e.
\end{equation}

We can verify that any choice of $j$ divisors from the $j+2$ terms in this last product allows to solve for the complete system $d_{1},\dots,d_{j},e$ with at most $k$ solutions. One of this choice is formed of no more than $\frac{j\omega(n)}{j+2}$ prime numbers. Since there are $j^{i}$ ways to write an integer formed of $i$ prime numbers as a product of $j$ integers, we deduce that
\begin{eqnarray*}
E^{*}_{j,k}(n) & \le & k\binom{j+2}{j}\sum_{0 \le i \le \frac{j\omega(n)}{j+2}}\binom{\omega(n)}{i}j^{i}\\
& \ll & \frac{k}{\sqrt{\omega(n)}}\Bigl(\frac{j+2}{2^{\frac{2}{j+2}}}\Bigr)^{\omega(n)}.
\end{eqnarray*}
To derive the second line from the first, we have first computed the ratio of two consecutive terms in the sum. We have then obtained the inequality using a geometric series and Stirling's formula. The result then follows from Remark \ref{rem_*}.

\begin{rem}\label{rem_imp}
We can establish the same result for $E_{j,k}(n)$ in the cases $j=1$ and 2. For $j=1$, this is simply Theorem \ref{thm:1}a, so we will assume that $j \ge 2$ in what follows. For $j \ge 3$, our proof works, but it provides a larger constant. Indeed, following the same notation, we will distinguish two cases whose contributions will be denoted $E^{(1)}_{j,k}(n)$ and $E^{(2)}_{j,k}(n)$ respectively. Let $\theta_{j}$ be chosen later such that $\frac{j}{j+2}\le\theta_{j} < \frac{j}{j+1}$. In the first case, we have either $\omega(d_{1}\cdots d_{j}) \le \theta_{j}\omega(n)$ or
$$
\omega\Bigl(\frac{d_{1}\cdots d_{j}g_{j,n}(d_{1},\dots,d_{j})}{d_{i}}\Bigr) \le \theta_{j}\omega(n)
$$
for a certain $i$ in $\{1,\dots,j\}$. In this case, since there are $j^{i}$ ways to express an integer formed from $i$ distinct primes as a product of $j$ integers, we deduce that
\begin{eqnarray*}
E^{(1)}_{j,k}(n) & \le & (jk+1)\sum_{0 \le i \le \theta_{j}\omega(n)}\binom{\omega(n)}{i}j^{i}\\
& \ll & \frac{k}{\sqrt{\omega(n)}}\Bigl(\frac{j^{\theta_{j}}}{\theta_{j}^{\theta_{j}}(1-\theta_{j})^{1-\theta_{j}}}\Bigr)^{\omega(n)}.
\end{eqnarray*}

Now, considering the second case, we have
$$
\omega(d_{i}) > \frac{\theta_{j}\omega(n)}{j}
$$
for at least one $i$ in $\{1,\dots,j\}$. Let $i_{0}$ be one of these values, we also have 
$$
\omega\Bigl(\frac{d_{1}\cdots d_{j}g_{j,n}(d_{1},\dots,d_{j})}{d_{i_{0}}}\Bigr) > \theta_{j}\omega(n)
$$
By reasoning as above, we get
\begin{eqnarray*}
E^{(2)}_{j,k}(n) & \le & jk\sum_{i \ge \theta_{j}\omega(n)}\sum_{s \ge \theta_{j}\omega(n)/j}\binom{\omega(n)}{i}\binom{\omega(n)-i}{s}(j-1)^{i}\\
& \ll & k\sqrt{\omega(n)}\sum_{i \ge \theta_{j}\omega(n)}\binom{\omega(n)}{i}\binom{\omega(n)-i}{\lceil\theta_{j}\omega(n)/j\rceil}(j-1)^{i}\\
& = & k\sqrt{\omega(n)}\binom{\omega(n)}{\lceil\theta_{j}\omega(n)/j\rceil}\sum_{i \ge \theta_{j}\omega(n)}\binom{\omega(n)-\lceil\theta_{j}\omega(n)/j\rceil}{i}(j-1)^{i}\\
& \ll & k\sqrt{\omega(n)}\binom{\omega(n)}{\lceil\theta_{j}\omega(n)/j\rceil}\binom{\omega(n)-\lceil\theta_{j}\omega(n)/j\rceil}{\lceil\theta_{j}\omega(n)\rceil}(j-1)^{\theta_{j}\omega(n)}\\
& \ll & \frac{k}{\sqrt{\omega(n)}}\left(\frac{(j-1)^{\theta_{j}}}{(\theta_{j}/j)^{\theta_{j}/j}\theta_{j}^{\theta_{j}}(1-\theta_{j}-\theta_{j}/j)^{1-\theta_{j}-\theta_{j}/j}}\right)^{\omega(n)}.
\end{eqnarray*}
We then choose $\theta_{j}$ to balance the contributions from both cases and the result follows from Lemma \ref{lem:3}.

For $j=2$, the optimal choice is $\theta_{2}=\frac{1}{2}$, which means that $E_{2,k}(n)$ also satisfies the inequality \eqref{eq:thm:2} as desired. For $j \ge 3$, we do not claim to have presented the best possible result here. However, we found no argument that could yield the inequality \eqref{eq:thm:2} for $E_{j,k}(n)$. Specifically, for $j=3$, an example of a limiting case is given by
$$
(\omega(d_{1}),\omega(d_{2}),\omega(d_{3}),\omega(g_{3,k}(d_{1},d_{2},d_{3})),\omega(e)) \approx \Bigl(\frac{1}{5}+\epsilon,\frac{1}{5}+\epsilon,\frac{1}{5}+\epsilon,\frac{1}{5}-\frac{3\epsilon}{2},\frac{1}{5}-\frac{3\epsilon}{2}\Bigr)\cdot \omega(n)
$$
where $\epsilon > 0$ is chosen to be sufficiently small. It is at this stage that we chose to introduce the notion of strongly $k$-regular functions in the hope of eventually getting a nice theory.
\end{rem}

\begin{rem}
Given the previous observation and the well-understood case of $j=1$, it is plausible that the limiting exponent for $j=2$ also occurs when $v=1$. In this scenario, we would have
$$
E_{2,k}(n) \le k\kappa_{2}(n)^{1-\delta'},
$$
where
$$
\delta':=1-\frac{3}{2}\frac{\log 2}{\log 3} = 0.0536053696\dots
$$
\end{rem}

\subsection{Proof of Theorem \ref{thm:2}b}

Given an integer $n$, we use the function
$$
T(d):=\prod_{\substack{p \mid d \\ p^{v} \| n}}\frac{1}{v} \qquad (d \in \mathcal{D}_{n})
$$
as introduced in \cite{rdlb}. We observe that
$$
T(d_{1})\cdots T(d_{j})T(g_{j,n}(d_{1},\dots,d_{j})) \ge T(n)
$$
from which we deduce that there is a choice of $j$ terms from the left side with a product larger than $T(n)^{\frac{j}{j+1}}$. Since $g_{j,n}$ is $k$-regular,
\begin{eqnarray*}
E_{j,k}(n) & \le & \frac{jk+1}{T(n)^{\frac{j}{j+1}}}\sum_{\substack{d_{i} \mid n \\ i=1,\dots,j \\ \gcd(d_{i_1},d_{i_2})=1 \\ (i_1 \neq i_2)}}T(d_{1})\cdots T(d_{j})\\
& = & (jk+1)\prod_{p^{v}\| n}\bigl((j+1)v^{\frac{j}{j+1}}\bigr)
\end{eqnarray*}
and the result follows from Lemma \ref{lem:3}.

\begin{rem}
Observe that
$$
(j+1)v^{\frac{j}{j+1}} < jv+1 \qquad(j \in \mathbb{N},\ v \ge 2).
$$
\end{rem}

\subsection{Proof of Theorem \ref{thm:3}a}

The argument revolves around the identity
$$
\mathsf{E}(\mathcal{D}_{n})=\sum_{e \mid n}\sum_{m \ge 1}U(e,m)^{2}
$$
where
$$
U(e,m):=|\{(d_{1},d_{2}) \in \mathcal{D}_{n}^2|\ d_{1}+d_{2}=me,\ \gcd(me,n)=e\}|.
$$
We first need an estimate for the number of solutions $(d_{1},d_{2},m) \in \mathcal{D}_{n}^{2}\times \mathbb{N}$ to the equation $d_{1}+d_{2}=me$ with $\gcd(me,n)=e$. Assume that $\gcd(d_{1},d_{2})=d$ which implies $d \mid e$. We then have
$$
d=\gcd(d_{1},d_{2})=\gcd(d_{1},me-d_{1})=\gcd(d_{1},me)=\gcd(d_{1},e)=\gcd(d_{2},e)
$$
and we deduce that $\frac{d_{1}}{d},\frac{d_{2}}{d} \mid \frac{n}{e}$ with $\gcd(\frac{d_{1}}{d},\frac{d_{2}}{d})=1$. It follows that we have at most $2^{\omega(e)}\kappa_{2}(n/e)$ such solutions, so that
\begin{equation}\label{4.1}
\sum_{m \ge 1}U(e,m) \le 3^{\omega(n)}\Bigl(\frac{2}{3}\Bigr)^{\omega(e)}.
\end{equation}

We also need an estimate for $U(e,m)$ for a fixed value of $m$. As above, we have $\gcd(d_{1},d_{2})=d$, $\frac{d_{1}}{d},\frac{d_{2}}{d} \mid \frac{n}{e}$ and $\gcd(\frac{d_{1}}{d},\frac{d_{2}}{d})=1$. We deduce that for a fixed $m$ we have $\frac{d_{1}}{d}\bigl(\frac{me}{d}-\frac{d_{1}}{d}\bigr) \mid \frac{n}{e}$ so that Théorème 2a of \cite{rdlb} implies that there are at most $2^{c\omega(n/e)}$ solutions $\frac{d_{1}}{d}$ with $c=\frac{\log 3}{\log 2}-\frac{2}{3}$. We then have at most $2^{c\omega(n)+(1-c)\omega(e)}$ choices for $d_{1}$ in total, so that
\begin{equation}\label{4.2}
U(e,m) \le 2^{c\omega(n)+(1-c)\omega(e)}.
\end{equation}

Let $\eta$ be a parameter that will be specified later. We write
\begin{eqnarray*}
\sum_{\substack{e \mid n \\ \omega(e) \ge (1-\eta)\omega(n)}}\sum_{m \ge 1}U(e,m)^{2} & \le & \sum_{\substack{e \mid n \\ \omega(e) \ge (1-\eta)\omega(n)}}2^{c\omega(n)+(1-c)\omega(e)}\sum_{m \ge 1}U(e,m)\\
& \le & \sum_{\substack{e \mid n \\ \omega(e) \ge (1-\eta)\omega(n)}}(3\cdot2^{c})^{\omega(n)}\Bigl(\frac{2^{2-c}}{3}\Bigr)^{\omega(e)}\\
\end{eqnarray*}
from \eqref{4.1} and \eqref{4.2}. For the small values of $\omega(e)$, we simply write
\begin{eqnarray*}
\sum_{\substack{e \mid n \\ \omega(e) < (1-\eta)\omega(n)}}\sum_{m \ge 1}U(e,m)^{2} & \le & \max_{\substack{e \mid n \\ \omega(e) < (1-\eta) \omega(n)}}2^{c\omega(n)+(1-c)\omega(e)}\sum_{e \mid n}\sum_{m \ge 1}U(e,m)\\
& = & \max_{\substack{e \mid n \\ \omega(e) < (1-\eta) \omega(n)}}2^{(2+c)\omega(n)+(1-c)\omega(e)}.
\end{eqnarray*}
We then choose $\eta:=0.2702949686\dots$ and the result follows.

\subsection{Proof of Theorem \ref{thm:3}b}

We consider the function
$$
r_{n}(m):=|\{(d_{1},d_{2}) \in \mathcal{D}_{n}^{2}:\ d_{1}+d_{2}=m\}|.
$$
It is enough to establish the estimate
$$
r_{n}(m) \ll \frac{\tau(n)}{\sqrt{\Omega_{2}(n)}}
$$
uniformly in $m$. We observe that $\frac{m}{2} \le \max(d_{1},d_{2}) \le m-1$ for each couple $(d_{1},d_{2})$ counted by $r_{n}(m)$ and that this maximum fixes at most 2 solutions, Therefore, this last inequality follows from Lemma \ref{lem:6}.

\subsection{Proof of Theorem \ref{thm:4}}

Given a congruence class $t$ among $1,\dots,q \pmod{q}$ and an integer $m$, we denote by $\lambda_{m}(t)$ the number of divisors of $m$ that are congruent to $t \pmod{q}$. The identity
\begin{equation}\label{eq:4.0}
\mathcal{H}(n,q) = \sum_{t=1}^{q}\lambda_{n}(t)^{2} \le \max_{t}\lambda_{n}(t)\sum_{t=1}^{q}\lambda_{n}(t) = \bigl(\max_{t}\lambda_{n}(t)\bigr)\tau(n),
\end{equation}
leads us to focus on an estimate for $\lambda_{n}:=\max_{t}\lambda_{n}(t)$. We note that for any factorization $n=ab$ in coprime integers $a$ and $b$ we have
$$
\lambda_{n}(t) = \sum_{d \mid b}\lambda_{a}(td^{-1}),
$$
from which we deduce
\begin{equation}\label{eq:4.00}
\lambda_{ab} \le \lambda_{a}\tau(b) \quad (\gcd(a,b)=1).
\end{equation}

Also, using Lemma \ref{lem:8}, we have
\begin{equation}\label{eq:4.4}
q \ge m^{1/4+\mu} \quad \Rightarrow \quad \lambda_{m} \ll \frac{1}{\mu^{3/2}}\qquad(0 < \mu \le 3/4)
\end{equation}
for every divisor $m$ of $n$.

\subsubsection*{Completion of the proof of Theorem \ref{thm:4}}

After this preparation, the proof is divided in three cases. The first case is when $q \ge n$, where we clearly have $\lambda_{n} = 1$ and therefore $\mathcal{H}(n,q)=\tau(n)$. The second case is when $n^{1/4} \le q < n$. Let $a$ be the divisor $n$ defined by $a=\min_{p^{\alpha} || n}\frac{n}{p^{\alpha}}$ and write $n=ab$. Given that $a \le n^{1-\frac{1}{\omega(n)}}$, it follows that $q \ge n^{1/4} \ge a^{\frac{1}{4}+\frac{1}{4\omega(n)}}$. From \eqref{eq:4.4} and the inequality $\tau(n) \ge 2^{\omega(n)}$, it follows that $\lambda_{a} \ll \omega(n)^{3/2} \ll (\log \tau(n))^{3/2}$. By \eqref{eq:4.00},
\begin{eqnarray*}
\lambda_{n} & \le & \lambda_{a}\tau(b)\\
& \ll & V(n)(\log \tau(n))^{3/2}
\end{eqnarray*}
and the result follows from inequality \eqref{eq:4.0}.

The third case is when $q < n^{1/4}$. We select a small $0 < \epsilon < \eta$ to be determined later. The first step is to define a factorization $n=ab$. Let us consider the pairs $(\rho_{i},\theta_{i})$, $i=1,\dots,\omega(n)$, defined by $(\alpha_{i}+1)=\tau(n)^{\rho_{i}}$ and $p_{i}^{\alpha_{i}}=n^{\theta_{i}}$. By Lemma \ref{lem:7}, there exists a permutation $\sigma$ such that
$$
\sum_{i=1}^{s}\rho_{\sigma_{i}} \le \sum_{i=1}^{s}\theta_{\sigma_{i}}
$$
for each $1 \le s \le \omega(n)$. We look at $\sigma$ as fixed and we choose the smallest $s$ such that
$$
\sum_{i=1}^{s}\theta_{\sigma_{i}} \ge 1-4\eta+\epsilon, \quad \mbox{so that} \quad \sum_{i=1}^{s-1}\theta_{\sigma_{i}} < 1-4\eta+\epsilon.
$$
We define
$$
b:=\prod_{i=1}^{s}p_{\sigma_{i}}^{\alpha_{\sigma_{i}}}.
$$
With the factorization $n=ab$ established, we note that
$$
\tau(b) = (\alpha_{\sigma_{s}}+1) \cdot \prod_{i=1}^{s-1}(\alpha_{\sigma_{i}}+1) \le 2V(n) \tau(n)^{1-4\eta+\epsilon}
$$
and also that, since $a=n/b$, we have $a \le n^{4\eta-\epsilon}$. Because $\eta > (4\eta-\epsilon)(1/4+\mu)$ holds with $\mu:=\epsilon/4$, we conclude that $\lambda_{a} \ll \frac{1}{\epsilon^{3/2}}$ by \eqref{eq:4.4}. By \eqref{eq:4.00},
\begin{eqnarray*}
\lambda_{n} & \le & \lambda_{a}\tau(b)\\
& \ll & \frac{1}{\epsilon^{3/2}}V(n) \tau(n)^{1-4\eta+\epsilon}
\end{eqnarray*}
in which we choose $\epsilon=\frac{1}{\log \tau(n)}$. We verify that it is either small enough, which is $\frac{1}{\log \tau(n)} \le \frac{\log q}{\log n}$, or the inequality is trivial. The result follows from \eqref{eq:4.0}.

\section{Proof of the corollaries}
\subsection{Proof of Corollary \ref{cor:1}}

We start by partitioning the solutions according to $d:=\gcd(d_{1},d_{2})$. By writing $d'_{i}=\frac{d_{i}}{d}$ for $i=1,2,3$, we obtain the equation $d'_{1}+d'_{2}=d'_{3}$ where each $d'_{i} \in \mathcal{D}_{n/d}$ and each divisor is coprime to the others. Consequently, we can apply Theorem \ref{thm:1}b to the integer $n/d$ with the function $g_{2,n/d}(d'_{1},d'_{2})=d'_{1}+d'_{2}$ which is 1-regular. We conclude that, for each fixed $d$, the number of solutions is at most $\kappa_{2}(n/d)^{1-\delta}$. Thus, the total number of solutions is at most
\begin{eqnarray*}
\sum_{d \mid n}\kappa_{2}(n/d)^{1-\delta} & = & \prod_{p^{v} \| n}\bigl(1+\kappa_{2}(p)^{1-\delta}+ \cdots+ \kappa_{2}(p^{v})^{1-\delta}\bigr)\\
& \le &\prod_{p^{v} \| n}(v+1)^{2-\delta} =\tau(n)^{2-\delta}.
\end{eqnarray*}
We invoke Lemma \ref{lem:5} to derive the second line.

\begin{rem}
We note that the function $g_{2,n/d}(d'_{1},d'_{2})$, as defined previously, is also strongly 2-regular. Thus, Theorem \ref{thm:2}a gives the estimate $2(1+2^{3/2})^{\omega(n)}$ when $n$ is squarefree. Since here $j=2$, Remark \ref{rem_imp} allows us to conclude that the estimate $(1+2^{3/2})^{\omega(n)}$ holds in this case.
\end{rem}

\subsection{Proof of Corollary \ref{cor:2}}

First, we present a preliminary remark. Let $p$ be a prime number for which $p^{V(n)} \| n$ and $g_{j,n}$ be a $k$-regular function satisfying $F_{j,k}(n)=E_{j,k}(n)$. For each tuple $(d_{1},\dots,d_{j})\in U_{g}$, we have either $p \nmid d_{1}\cdots d_{j}$ or that
$$
p \nmid \frac{d_{1}\cdots d_{j}g_{j,n}(d_{1},\dots,d_{j})}{d_{i}}
$$
for an $i \in \{1,\dots,j\}$. We deduce that
\begin{equation}\label{c2}
E_{j,k}(n) \le \frac{(jk+1)\kappa_{j}(n)}{jV(n)+1}.
\end{equation}

We are now ready for the argument. If $\omega(n) \ge \frac{\log \Omega(n)}{\delta_{j}\log(j+1)}$ is satisfied, then the result follows directly from Theorem \ref{thm:1}a. Furthermore, if there are at least $j+1$ distinct prime numbers $p$ such that $p^{v}\| n$ with $v \ge \Omega(n)^{1/(j+1)}$, then the result follows from Theorem \ref{thm:2}b. Thus, we can assume that neither of these properties holds. In this case, we have
\begin{eqnarray*}
V(n) \le \Omega(n) & \le & jV(n)+\omega(n)\Omega(n)^{1/(j+1)}\\
& \le & jV(n)+O(\Omega(n)^{1/(j+1)}\log\Omega(n)),
\end{eqnarray*}
from which it follows that $V(n) \asymp \Omega(n)$. The result then becomes a consequence of \eqref{c2}.

\subsection{Proof of Corollary \ref{cor:3}}

We define
$$
W_{n}(\theta) := \sum_{d \mid n}e(\theta d),
$$
where $e(x):=e^{2\pi i x}$. We can express
\begin{eqnarray*}
G(m) & = & \int_{0}^{1}W^{2}_{n}(\theta)\overline{W}_{n}(\theta)e(-m\theta)d\theta\\
& \le & \int_{0}^{1}\bigl|W^{2}_{n}(\theta)\overline{W}_{n}(\theta)\bigr|d\theta.
\end{eqnarray*}
Thus, the result follows from an application of the Cauchy-Schwarz inequality and Theorem \ref{thm:3}a.

\begin{rem}
We also have
$$
\mathcal{H}(n,q)=\frac{1}{q}\sum_{a=1}^{q}\bigl|W_{n}\Bigl(\frac{a}{q}\Bigr)\bigr|^{2}.
$$
\end{rem}

\section{Conclusion}

We can use Theorem \ref{thm:1}b to estimate the number of solutions to the equation $d_{1}d_{2}=d_{3}+1$ in integers $(d_{1},d_{2},d_{3}) \in \mathcal{D}_{n}^{3}$. It suffices to use and approach similar to the one used in Corollary \ref{cor:1}, partitioning the solutions according to $\gcd(d_{1},d_{2})$, which yields the same result. This example guided us in the early stages of this project.

A different type of example is provided by the functions $g_{2,n}(t_{i},t_{j})=t_{\frac{i+j}{2}}$ and $g_{2,n}(t_{i},t_{j})=t_{\lfloor\frac{i+j}{2}\rfloor}$, which are 1-regular and 2-regular, respectively. This is reminiscent of the function $g_{1,n}(t_{i})=t_{i+1}$, which is implicitly mentioned in the introduction. We can describe a general example by choosing $g_{j,n}$ to be an appropriate polynomial in $\mathbb{Z}[x_{1},\dots,x_{j}]$.

The results can be further extended by simultaneously considering several functions $g_{1,j,n},g_{2,j,n},\dots$ that satisfy a certain generalization of the concept of regularity.

\subsection*{Acknowledgement}

I would like to express my gratitude to Jean-Marie De Koninck for his valuable advice and encouragement. I also wish to thank Régis de la Bretèche for an engaging discussion related to this work.

{\sc D\'epartement de math\'ematiques et de statistique, Universit\'e Laval, Pavillon Alexandre-Vachon, 1045 Avenue de la M\'edecine, Qu\'ebec, QC G1V 0A6} \\
{\it E-mail address:} {\tt Patrick.Letendre.1@ulaval.ca}

\end{document}